\newtheorem{thm}{Theorem}
\newtheorem{conj}{Conjecture}
\newtheorem{theorem}{Theorem}[section]
\newtheorem{lemma}[theorem]{Lemma}
\newtheorem{corollary}[theorem]{Corollary}
\theoremstyle{definition}
\newtheorem{definition}{Definition}
\newtheorem{remark}{Remark}
\newtheorem{case}{Case}
\theoremstyle{remark}
\numberwithin{equation}{section}
\DeclareMathOperator{\RE}{Re}
\newcommand{\ID}{\mathbb{D}}
\begin{document}
\thispagestyle{empty} \setcounter{page}{1}


\title[Radius of fully starlikeness and fully convexity of harmonic...]
{Radius of fully starlikeness and fully convexity of harmonic linear differential operator}
\author[Z. Liu]{ZhiHong Liu}
\address{Z. Liu, School of Mathematics and Econometrics, Hunan University,
Changsha 410082, Hunan, People's Republic of China.
\vskip.03in College of Mathematics, Honghe University,
Mengzi 661199, Yunnan, People's Republic of China.}
\email{\textcolor[rgb]{0.00,0.00,0.84}{liuzhihongmath@163.com}}

\author[S. Ponnusamy]{Saminathan Ponnusamy}
\address{S. Ponnusamy, Department of Mathematics,
Indian Institute of Technology Madras, Chennai--600 036, India.}
\email{\textcolor[rgb]{0.00,0.00,0.84}{samy@isichennai.res.in, samy@iitm.ac.in}}


\subjclass[2010]{30C45, 31C05}

\keywords{Harmonic mappings, harmonic differential operator, coefficient inequality, radius of univalence, fully starlike harmonic mappings, fully convex harmonic mappings.}

\begin{abstract}
Let $f=h+\overline{g}$ be a normalized harmonic mapping in the unit disk $\ID$. In this paper, we obtain the sharp radius of univalence, fully starlikeness and fully convexity of the harmonic linear differential operators $D_f^{\epsilon}=zf_{z}-\epsilon\overline{z}f_{\overline{z}}~(|\epsilon|=1)$ and  $F_{\lambda}(z)=(1-\lambda)f+\lambda D_f^{\epsilon}~(0\leq\lambda\leq 1)$ when the coefficients of $h$ and $g$ satisfy harmonic Bieberbach coefficients
conjecture conditions. Similar problems are also solved when the coefficients of $h$ and $g$ satisfy the corresponding necessary conditions of the harmonic convex function $f=h+\overline{g}$. All results are sharp. Some of the results are motivated by the work of Kalaj et al. \cite{Kalaj2014} (Complex Var. Elliptic Equ. 59(4) (2014), 539--552).
\end{abstract}
\maketitle

\section{Introduction}
\noindent
Throughout this paper, let $\mathcal{H}$ denote the class of all complex-valued harmonic functions $f=h+\overline{g}$
defined on the unit disk $\ID=\{z\in \mathbb{C}:|z|<1\}$ normalized by $f(0)=0=h(0)=h'(0)-1$, where $h$ and $g$ are analytic in $\ID$ such that
\begin{equation}\label{phg}
h(z)=z+\sum_{n=2}^{\infty}a_{n}z^{n}\quad{\rm and }\quad g(z)=\sum_{n=1}^{\infty}b_{n}z^{n}.
\end{equation}
Also, let $\mathcal{H}_0=\{f=h+\overline{g}\in\mathcal{H}: g'(0)=0\}$ and $J_{f}(z)=|h'(z)|^2-|g'(z)|^2$ denote
the Jacobian of $f=h+\overline{g}$.
According to Lewy's theorem~(see~\cite{Lewy1936} or ~\cite{Duren2004}), $f\in\mathcal{H}$ is
locally univalent and sense-preserving if and only if $J_{f}(z)>0$ in $\ID$, i.e $|g'(z)|<|h'(z)|$;
or equivalently, the dilatation $\omega(z)=g'(z)/h'(z)$ is analytic in $\ID$ with $|\omega(z)|<1$ in $\ID$. Note that if $f\in\mathcal{H}_0$,
then it is clear that $\omega(0)=0$. Let $\mathcal{S}_{H}$ denote the class of univalent and sense-preserving functions
$f=h+\overline{g}\in \mathcal{H}$. The class $\mathcal{S}_{H}$ for which $g(z)\equiv 0$ reduces to the class $\mathcal{S}$ of normalized univalent analytic functions in $\ID$. This is fundamental in the study of univalent functions theory. The geometric subclasses of $\mathcal{S}_{H}$ consisting of
the {\em convex}, {\em starlike} and {\em close-to-convex} functions in $\ID$ are denoted by $\mathcal{C}_{H}$, $\mathcal{S}_{H}^{*}$ and $\mathcal{K}_{H}$, respectively.
It is a standard practice to let
$$\mathcal{S}_{H}^{0}:=\mathcal{S}_{H}\cap \mathcal{H}_{0},\quad \mathcal{C}_{H}^{0}:=\mathcal{C}_{H}\cap \mathcal{H}_{0},\quad \mathcal{S}_{H}^{0*}:=\mathcal{S}_{H}^{*}\cap \mathcal{H}_{0},\quad  \mathcal{K}_{H}^{0}:=\mathcal{K}_{H}\cap \mathcal{H}_{0},
$$

We now recall one of the results of  Clunie and Sheil-Small \cite{Clunie1984} which connects analytic close-to-convex functions with that of
harmonic close-to-convex functions.

\begin{thm}\label{thmClinie}
Suppose that $h$ and $g$ are analytic in $\ID$ with $|g'(0)|<|h'(0)|$ and $h+\epsilon g$ is close-to-convex
for each $\epsilon,|\epsilon|=1$, then $f=h+\overline{g}$ is close-to-convex in $\ID$.
\end{thm}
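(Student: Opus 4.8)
The plan is to verify the two defining properties of a close\nbd to\nbd convex harmonic mapping in turn: first that $f=h+\overline{g}$ is univalent and sense-preserving, and then that $f(\ID)$ is a close-to-convex domain, i.e. that its complement $\mathbb{C}\setminus f(\ID)$ decomposes into non-crossing half-lines. The sense-preserving property is the easy part: since each $h+\epsilon g$ is close-to-convex, hence locally univalent, we have $h'(z)+\epsilon g'(z)\neq0$ for every $z\in\ID$ and every $\epsilon$ with $|\epsilon|=1$; this forces $|h'(z)|\neq|g'(z)|$ throughout, and combined with $|g'(0)|<|h'(0)|$ and continuity it gives $|g'(z)|<|h'(z)|$ on $\ID$, so $f$ is sense-preserving by Lewy's theorem. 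The real content therefore lies in univalence and in the ray structure of the complement.

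For univalence I would argue by contradiction. Suppose $f(z_1)=f(z_2)$ with $z_1\neq z_2$ in $\ID$, and set $w=h(z_1)-h(z_2)$, so that $f(z_1)=f(z_2)$ forces $\overline{w}=g(z_2)-g(z_1)$, i.e. $g(z_1)-g(z_2)=-\overline{w}$. If $w=0$ then $h$ and $g$ take equal values at $z_1,z_2$, so already $h+g$ fails to be univalent; if $w=|w|e^{i\theta}\neq0$, then choosing $\epsilon=e^{2i\theta}$ (which satisfies $|\epsilon|=1$) gives
\begin{equation*}
(h+\epsilon g)(z_1)-(h+\epsilon g)(z_2)=w+\epsilon\bigl(g(z_1)-g(z_2)\bigr)=w-\epsilon\overline{w}=|w|e^{i\theta}-|w|e^{i\theta}=0,
\end{equation*}
contradicting the univalence of the close-to-convex function $h+\epsilon g$. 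In every case we reach a contradiction, so $f$ is univalent.

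The geometric heart of the argument is a projection identity. Using $\RE\{e^{-i\alpha}\overline{g}\}=\RE\{e^{i\alpha}g\}$, a short computation shows that for every real $\alpha$ and every $z\in\ID$,
\begin{equation*}
\RE\{e^{-i\alpha}f(z)\}=\RE\{e^{-i\alpha}h(z)+e^{i\alpha}g(z)\}=\RE\{e^{-i\alpha}\phi_\alpha(z)\},\qquad \phi_\alpha:=h+e^{2i\alpha}g.
\end{equation*}
Thus, for each direction $\alpha$, the harmonic mapping $f$ and the analytic close-to-convex function $\phi_\alpha$ have identical projections onto the direction $e^{i\alpha}$, so inside $\ID$ they share the level curves of $z\mapsto\RE\{e^{-i\alpha}\,\cdot\,\}$. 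I would then exploit that a half-line pointing in the direction $e^{i\alpha}$ is precisely a level set of $\IM\{e^{-i\alpha}w\}$ along which $\RE\{e^{-i\alpha}w\}$ increases; since $\phi_\alpha(\ID)$ is close-to-convex its complement is a union of non-crossing rays, and the coincidence of projections should let me transfer this ray structure, direction by direction, to the complement of $f(\ID)$.

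The step I expect to be the main obstacle is this last transfer: converting the interior identity of projections into a statement about the boundary $\partial f(\ID)$ and the global decomposition of $\mathbb{C}\setminus f(\ID)$ into non-crossing half-lines. Concretely, one must pass to the limit on the image curves $f(re^{i\theta})$, $0\le\theta\le2\pi$, as $r\to1$, and verify both that each boundary point admits a complementary ray and that the resulting rays never cross. Here I would lean on the univalence of $f$ already established, together with an argument-principle comparison between the image curves of $f$ and those of $\phi_\alpha$ supplied by the projection identity. The univalence argument and the identity itself are routine; the delicate point is controlling this boundary behaviour uniformly in the direction parameter $\alpha$.
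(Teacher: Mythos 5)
Your first two steps are correct, and they are exactly the classical arguments: local univalence of every $h+\epsilon g$ gives $h'(z)+\epsilon g'(z)\neq 0$ for all $|\epsilon|=1$, hence $|h'(z)|\neq|g'(z)|$ on $\ID$, and continuity together with $|g'(0)|<|h'(0)|$ yields $|g'|<|h'|$ throughout; likewise the rotation trick $\epsilon=e^{2i\theta}$ applied to $w=h(z_1)-h(z_2)$ turns any coincidence $f(z_1)=f(z_2)$ into a violation of the univalence of some close-to-convex $h+\epsilon g$. (Note that the paper itself offers no proof of this statement: it is Theorem~A, quoted from Clunie and Sheil-Small \cite{Clunie1984}, so the comparison here is with the known proof rather than with anything internal to the paper.)

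The genuine gap is your third step, which is the actual content of the theorem: you never prove that the range $f(\ID)$ is a close-to-convex domain. The projection identity $\RE\{e^{-i\alpha}f\}=\RE\{e^{-i\alpha}\phi_\alpha\}$ with $\phi_\alpha=h+e^{2i\alpha}g$ is indeed the key identity, but the ``transfer of the ray structure, direction by direction'' cannot work in the set-wise form you sketch, for two concrete reasons. First, the non-crossing half-lines covering $\mathbb{C}\setminus\phi_\alpha(\ID)$ are not parallel to $e^{i\alpha}$; they point in arbitrary directions, so there is no correspondence between the projection parameter $\alpha$ and the rays you hope to move over to $\mathbb{C}\setminus f(\ID)$. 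Second, and more fundamentally, the identity constrains only the coordinate $\RE\{e^{-i\alpha}f\}$; the complementary coordinate $\IM\{e^{-i\alpha}f\}$ bears no relation whatsoever to $\IM\{e^{-i\alpha}\phi_\alpha\}$, so $f(\ID)$ and $\phi_\alpha(\ID)$ can be entirely different sets even though all of their direction-$\alpha$ projections agree, and no inclusion or comparison of complements follows. In the Clunie--Sheil-Small proof the identity is exploited analytically rather than geometrically: close-to-convexity is characterized by a Kaplan-type condition controlling how far $\arg\left(\frac{\partial}{\partial\theta}f(re^{i\theta})\right)$ can decrease (equivalently, by counting level arcs of the harmonic functions $\RE\{e^{-i\alpha}f\}-c$ in $\ID$), and it is in verifying that condition that the close-to-convexity, not merely the univalence, of every $\phi_\alpha$ is consumed. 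Without an argument of this type, your proposal establishes only that $f$ is a sense-preserving univalent mapping, which is strictly weaker than the assertion of the theorem.
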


As an analog of Bieberbach Conjecture proved by de Branges~\cite{DeBranges1985} for functions in $\mathcal{S}$,
Clunie and Shell-Small~\cite{Clunie1984} proposed the following conjecture for functions in $\mathcal{S}_{H}^{0}$.
\begin{conj}\label{ConjSH}
Let $f=h+\overline{g}\in\mathcal{S}_{H}^{0}$, where the representation of $h$ and $g$ are given by~\eqref{phg} with $b_1=0$. Then
for each $n\geq 2$,
\begin{equation}\label{BBHJ}
|a_{n}|\leq \frac{1}{6}(2n+1)(n+1), \quad |b_{n}|\leq \frac{1}{6}(2n-1)(n-1)
 ~\mbox{ and }~\big| |a_{n}|-|b_{n}| \big |\leq n .
\end{equation}
Equality holds for the harmonic Koebe function defined by
\begin{equation}\label{eqHC}
\begin{split}
K(z)&=H_{1}(z)+\overline{G_1(z)} =\frac{z-\frac{1}{2}z^2+\frac{1}{6}z^3}{(1-z)^3}+\overline{\frac{\frac{1}{2}z^2+\frac{1}{6}z^3}{(1-z)^3}}.
\end{split}
\end{equation}
\end{conj}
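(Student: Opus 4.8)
\emph{Prefatory remark on status.} The statement labelled Conjecture~\ref{ConjSH} is the harmonic analogue of the classical Bieberbach conjecture, and—unlike the analytic case settled by de~Branges—it is \emph{open} in full generality. Even the leading case $n=2$ is not known at the conjectured sharp value $|a_2|\le \tfrac{5}{2}=\tfrac16(2\cdot2+1)(2+1)$; only larger unconditional bounds are available. So a responsible ``proof proposal'' is really a plan for the two complementary attacks by which \eqref{BBHJ} can be approached, together with an honest account of why neither closes the gap for all of $\mathcal{S}_{H}^{0}$.

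\emph{First route: a global Loewner/Milin attack.} I would try to import the machinery that settled the analytic conjecture. The idea is to pass from the coefficients in \eqref{phg} to logarithmic coefficients, formulate a harmonic Milin-type inequality, and exponentiate via the Lebedev--Milin inequalities. The indispensable input for de~Branges was a Loewner chain $f_t$ solving the Loewner PDE together with the positivity of certain special-function coefficients. For $f=h+\overline{g}$ one would need a harmonic Loewner theory whose evolution respects simultaneously the analytic part $h$, the co-analytic part $g$, and the constraint that the dilatation $\omega=g'/h'$ satisfies $\|\omega\|_{\infty}\le 1$ on $\ID$. The extremal function \eqref{eqHC} indicates the target: $h$ and $g$ must be driven to their \emph{joint} extreme simultaneously, and any proof must produce exactly the harmonic Koebe coefficients as the equality case.

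\emph{Second route: subclass-by-subclass verification.} For the geometric subclasses $\mathcal{S}_{H}^{0*}$, $\mathcal{C}_{H}^{0}$, and $\mathcal{K}_{H}^{0}$ the bounds are genuinely accessible through the Clunie--Sheil-Small structure theory, and I would carry this out as follows. The third inequality is the easiest: for the close-to-convex subclass one uses that, in the spirit of Theorem~\ref{thmClinie}, the analytic combination $h+\epsilon g=z+\sum_{n\ge 2}(a_{n}+\epsilon b_{n})z^{n}$ (recall $b_1=0$) is close-to-convex for each $|\epsilon|=1$, whence the classical sharp estimate $|a_{n}+\epsilon b_{n}|\le n$ holds. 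Choosing the unit modulus $\epsilon$ so that $\epsilon b_{n}$ is anti-aligned with $a_{n}$ gives $\big||a_{n}|-|b_{n}|\big|=|a_{n}+\epsilon b_{n}|\le n$ at once. For the separate bounds on $|a_n|$ and $|b_n|$ one exploits the integral/structural representation of starlike harmonic mappings, together with $g'=\omega h'$, to reduce the estimates to sharp coefficient bounds for the associated starlike analytic functions, matching the harmonic Koebe coefficients $\tfrac16(2n+1)(n+1)$ and $\tfrac16(2n-1)(n-1)$.

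\emph{Main obstacle.} The hard part is the global route, and I expect it to remain the genuine barrier: there is no usable harmonic Loewner differential equation, and the coupling of $h$ and $g$ through $\omega$ destroys the clean recursive coefficient estimates available in $\mathcal{S}$. No convexity-in-$t$ argument currently captures the requirement that both parts be extremised together. Consequently, the defensible outcome is to \emph{prove} \eqref{BBHJ} only on the starlike, convex, and close-to-convex subclasses, where the structure theory suffices, and—precisely as the present paper does—to \emph{adopt} the three inequalities of \eqref{BBHJ} as coefficient hypotheses on $h$ and $g$ for the subsequent radius problems, rather than to assert them as an established theorem for the whole of $\mathcal{S}_{H}^{0}$.
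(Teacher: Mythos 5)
You have read the situation correctly: the statement is the Clunie--Sheil-Small conjecture, which the paper itself does not prove (it remains open), stating it only as Conjecture~\ref{ConjSH}, citing precisely the subclass verifications you describe (starlike and typically real by Clunie--Sheil-Small, convex in one direction by Sheil-Small, close-to-convex by Wang--Liang--Zhang), and thereafter using the inequalities \eqref{BBHJ} purely as coefficient hypotheses in its radius theorems. Your proposal's honest assessment of the status, the subclass routes, and the role of \eqref{BBHJ} in the paper matches the paper's treatment exactly.
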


Conjecture~\ref{ConjSH} has been verified for certain special situations, \emph{eg.} for the family $S_{H}^{0*}$ and the family
of typically real functions in $\ID$ by Clunie and Sheil-Small~\cite{Clunie1984} himself, for the family of functions that are convex
in one direction by Sheil-Small~\cite{Sheil1990}, and for the close-to-convex family $\mathcal{K}_{H}^{0}$ by Wang, Liang and Zhang~\cite{Wang2001}. More about harmonic mappings may be obtained from the monograph of Duren~\cite{Duren2004} and the survey article of Ponnusamy and Rasila~\cite{Samy2013}. For a recent development on Conjecture~\ref{ConjSH} for functions in $S_{H}^{0}$, we refer to the recent article of Ponnusamy et al. \cite{Samy2017}.

Also in~\cite{Clunie1984}, Clunie and Sheil-Small proved that if $f=h+\overline{g}\in \mathcal{C}_{H}^{0}$, then
\begin{equation}\label{BBHJ2}
|a_{n}|\leq \frac{n+1}{2} ~\mbox{ and }~|b_{n}|\leq \frac{n-1}{2}
\end{equation}
hold for all $n\geq 2$. Equality holds for the harmonic right half-plane mapping $L\in \mathcal{C}_{H}^{0}$ given by
\begin{equation}\label{eqRL}
\begin{split}
L(z)&=H_2(z)+\overline{G_2(z)} =\frac{z-\frac{1}{2}z^2}{(1-z)^2}+\overline{\frac{-\frac{1}{2}z^2}{(1-z)^2}}.
\end{split}
\end{equation}

Our present article is motivated by the recent work of Kalaj et al.~\cite{Kalaj2014} and also by the results known in the analytic case.
Accordingly, we are mainly concerned with certain class of (sense-preserving harmonic) functions $f=h+\overline{g}\in \mathcal{H}_{0}$ satisfying the condition~\eqref{BBHJ} or~\eqref{BBHJ2}.

\begin{definition}\label{DfF}
Let $\mathcal{F}\subset \mathcal{H}$, where $\mathcal{F}=\mathcal{S}_{H}$ or $\mathcal{S}_{H}^{0}$; $\mathcal{F}=\mathcal{S}_{H}^{*}$ or $\mathcal{S}_{H}^{0*}$; $\mathcal{F}=\mathcal{C}_{H}$ or $\mathcal{C}_{H}^{0}$; $\mathcal{F}=\mathcal{K}_{H}$ or $\mathcal{K}_{H}^{0}$ etc.. For $r\in(0,1)$, we say that $f=h+\overline{g}\in \mathcal{F}$ in $|z|<r$ if $f_{r}(z)=r^{-1}f(rz)\in \mathcal{F}$ in $|z|<1$ in the usual sense.
\end{definition}

For example, we say that $f\in S_{H}^{0*}$ in $|z|<r$ whenever $f_r\in \mathcal{S}_{H}^{0*}$ (in $|z|<1$). This convention will be followed throughout this article.

The class $\mathcal{R}$ given by
$$\mathcal{R}=\left\{h:~\mbox{$h$ is analytic in $\ID$},\, h(0)=0=h'(0)-1,\, \RE\left( h'(z)\right)>0, ~z\in\ID \right\}
$$
provides a univalent criterion. As a harmonic analog of it, Ponnusamy et al.~\cite{Samy2013CVEE}, considered the class
$$\mathcal{K}_{H}^{1}=\left\{f=h+\overline{g}\in\mathcal{H}: \RE \left(h'(z)\right)>|g'(z)|,~ z\in \ID\right\}.
$$
We say that $f\in \mathcal{F} \mathcal{S}_{H}^{*}$ if $f\in \mathcal{S}_{H}$ maps each subdisk $|z|<r$, $r\in(0,1)$ onto a
domain which is starlike with respect to the origin. Functions in $\mathcal{F} \mathcal{S}_{H}^{*}$ is then called a fully starlike in $\ID$.
We may set $\mathcal{F} \mathcal{S}_{H}^{0*}:= \mathcal{F}\mathcal{S}_{H}^{*}\cap  \mathcal{H}_0$ for obvious reason.  Similarly, one can
define the  class of $\mathcal{F} \mathcal{C}_{H}$ of fully convex functions, and $\mathcal{F} \mathcal{C}_{H}^0:= \mathcal{F}\mathcal{C}_{H}\cap  \mathcal{H}_0$. See~\cite{Chuaqui2004} (also \cite{Kalaj2014}) for a detailed discussion on fully starlike and fully convex functions.

Moreover, in~\cite{Kalaj2014}, it was shown that $f=h+\overline{g}\in \mathcal{F} \mathcal{S}_{H}^{0*}\cap \mathcal{K}_{H}^{2}$ whenever the coefficients of $h$ and $g$ given by \eqref{phg} satisfy the condition
$$\sum_{n=2}^{\infty}n(|a_n|+|b_n|)\leq 1-|b_1|,
$$
where
$$\mathcal{K}_{H}^{2}=\left\{f=h+\overline{g}\in\mathcal{H}: |h'(z)-1|<1-|g'(z)|,~ z\in\ID\right\}.
$$
Obviously, we may let $\mathcal{K}_{H}^{1,0}=\mathcal{K}_{H}^{1}\cap \mathcal{H}_{0}$ and $\mathcal{K}_{H}^{2,0}=\mathcal{K}_{H}^{2}\cap \mathcal{H}_{0}$.

\begin{definition}\label{dfFC}
Function $f\in \mathcal{H}$ is said to be {\em fully convex of
order $\alpha~(0\leq\alpha< 1)$ } if it maps every circle $|z| = r < 1$ in a one-to-one manner onto a
convex curve satisfying
$$\frac{\partial}{\partial\theta}\left(\arg\left(\frac{\partial}{\partial\theta}
f(re^{i\theta})\right)\right)>\alpha, \quad 0\leq\theta<2\pi,0<r<1.
$$
If $\alpha = 0$, then the harmonic mapping $f$ is said to be {\em fully convex} (univalent) in $\ID$.
\end{definition}

\begin{definition}\label{dfFS}
A function $f\in \mathcal{H}$ is said to be {\em fully starlike of
order $\alpha~(0\leq\alpha< 1)$} if it maps every circle $|z| = r < 1$ in a one-to-one manner onto a
 curve satisfying
$$\frac{\partial}{\partial\theta}\left(\arg\left(
f(re^{i\theta})\right)\right)>\alpha, \quad 0\leq\theta<2\pi,0<r<1.
$$
If $\alpha = 0$, then the harmonic mapping $f$ is said to be {\em fully starlike} (univalent) in $\ID$.
\end{definition}

Let $\mathcal{FS}^{*}_{H}(\alpha)$ and $\mathcal{FC}_{H}(\alpha)$, respectively, denote the subclasses of $\mathcal{S}_{H}$ consisting of fully
harmonic starlike and fully harmonic convex functions of order $\alpha$. Also, set
$$\mathcal{F}\mathcal{S}_{H}^{0*}(\alpha)=\mathcal{F}\mathcal{S}_{H}^{*}(\alpha)\cap \mathcal{H}_0 ~\mbox{ and }~\mathcal{F}\mathcal{C}_{H}^{0}(\alpha)=\mathcal{F}\mathcal{C}_{H}(\alpha)\cap \mathcal{H}_0
$$
so that $\mathcal{F}\mathcal{S}_{H}^{0*}(0)=:\mathcal{F}\mathcal{S}_{H}^{0*}$ and $\mathcal{F}\mathcal{C}_{H}^{0}(0)=:\mathcal{F}\mathcal{C}_{H}^{0}$.
 Jahangiri~\cite{Jahangiri1998,Jahangiri1999} gave a sufficient condition for functions $f\in \mathcal{H}$ to be $\mathcal{FS}^{*}_{H}(\alpha)$ and $\mathcal{FC}_{H}(\alpha)$, respectively.

\begin{lemma}\label{lemFS}
Let $f=h+\overline{g}\in \mathcal{H}$, where $h$ and $g$ are given by~\eqref{phg}. Furthermore, let
$$\sum_{n=2}^{\infty}\frac{n-\alpha}{1-\alpha} |a_{n}|+\sum_{n=1}^{\infty}\frac{n+\alpha}{1-\alpha} |b_{n}|\leq 1$$
and $0 \leq \alpha < 1$. Then $f\in\mathcal{FS}^{*}_{H}(\alpha)$.
\end{lemma}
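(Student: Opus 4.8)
The plan is to convert the geometric order condition of Definition~\ref{dfFS} into a single half-plane inequality for a quotient of power series, and then to verify that inequality termwise using the triangle inequality.

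First I would fix $z=re^{i\theta}$ with $0<r<1$ and compute the angular derivative of the argument. Since $\partial_\theta h(re^{i\theta})=izh'(z)$ and $\partial_\theta\overline{g(re^{i\theta})}=-i\,\overline{zg'(z)}$, we obtain $\partial_\theta f = i\bigl(zh'(z)-\overline{zg'(z)}\bigr)$. Writing $f_\theta/f$ and taking imaginary parts (with $\IM(iW)=\RE W$) gives
\begin{equation*}
\frac{\partial}{\partial\theta}\Bigl(\arg f(re^{i\theta})\Bigr)=\RE\frac{zh'(z)-\overline{zg'(z)}}{h(z)+\overline{g(z)}}=:\RE\frac{P(z)}{Q(z)}.
\end{equation*}
Hence it suffices to show $\RE\bigl(P(z)/Q(z)\bigr)>\alpha$ for $0<|z|<1$; the strict monotonicity of $\arg f$ along each circle then also forces the one-to-one requirement, so that $f\in\mathcal{FS}^{*}_{H}(\alpha)$.

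Next I would remove the denominator via the elementary half-plane criterion: for $0\le\alpha<1$ the vertical line $\RE w=\alpha$ is the perpendicular bisector of the segment joining $1$ and $2\alpha-1$, so $\RE(P/Q)\ge\alpha$ is equivalent to $|P/Q-1|\le|P/Q-(2\alpha-1)|$, that is, to
\begin{equation*}
|P-Q|\le|P+(1-2\alpha)Q|.
\end{equation*}
A direct expansion then gives
\begin{equation*}
P-Q=\sum_{n=2}^{\infty}(n-1)a_n z^n-\overline{\sum_{n=1}^{\infty}(n+1)b_n z^n}
\end{equation*}
and
\begin{equation*}
P+(1-2\alpha)Q=(2-2\alpha)z+\sum_{n=2}^{\infty}(n+1-2\alpha)a_n z^n+\overline{\sum_{n=1}^{\infty}(1-2\alpha-n)b_n z^n}.
\end{equation*}
Bounding $|P-Q|$ from above and $|P+(1-2\alpha)Q|$ from below by the triangle inequality, and using that $1-2\alpha-n\le 0$ for every $n\ge 1$ so that $|1-2\alpha-n|=n+2\alpha-1$, the desired comparison reduces to
\begin{equation*}
\sum_{n=2}^{\infty}\bigl[(n-1)+(n+1-2\alpha)\bigr]|a_n|\,|z|^n+\sum_{n=1}^{\infty}\bigl[(n+1)+(n+2\alpha-1)\bigr]|b_n|\,|z|^n\le(2-2\alpha)|z|.
\end{equation*}
The bracketed coefficients collapse to $2(n-\alpha)$ and $2(n+\alpha)$; dividing by $2(1-\alpha)|z|>0$ and using $|z|^{n-1}\le 1$ turns this into precisely the hypothesis $\sum_{n\ge 2}\frac{n-\alpha}{1-\alpha}|a_n|+\sum_{n\ge 1}\frac{n+\alpha}{1-\alpha}|b_n|\le 1$.

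Each step is short, so the main thing to get right is the bookkeeping: differentiating the anti-analytic part $\overline{g}$ correctly (this is what produces the minus sign and the conjugate in $P$), and pinning down the sign of $1-2\alpha-n$ so that the $|b_n|$ terms assemble into the factor $n+\alpha$ rather than $n-\alpha$. A secondary subtlety is strictness: the hypothesis only supplies $\le 1$, but on a circle $|z|=r<1$ the weights $r^{n-1}$ are strictly below $1$ for $n\ge 2$, which upgrades $\RE(P/Q)\ge\alpha$ to $\RE(P/Q)>\alpha$ away from the degenerate linear map $f=z+\overline{b_1 z}$, which can be handled directly.
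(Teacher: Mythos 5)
A preliminary point: the paper never proves Lemma~\ref{lemFS} at all --- it is quoted as a known result of Jahangiri \cite{Jahangiri1998,Jahangiri1999} --- so the only benchmark is Jahangiri's published argument, and your central computation is essentially that argument. The bookkeeping is correct: $\partial_\theta f=i\,(zh'(z)-\overline{zg'(z)})$, hence $\frac{\partial}{\partial\theta}\arg f(re^{i\theta})=\RE\left(P(z)/Q(z)\right)$ with $P=zh'-\overline{zg'}$ and $Q=h+\overline{g}$; the half-plane equivalence $\RE(P/Q)\ge\alpha\iff|P-Q|\le|P+(1-2\alpha)Q|$; the two expansions; the sign analysis $1-2\alpha-n\le0$ for $n\ge1$; and the collapse of the coefficients to $2(n-\alpha)$ and $2(n+\alpha)$, which after dividing by $2(1-\alpha)|z|$ and using $|z|^{n-1}\le1$ is exactly the hypothesis.

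The genuine gap is the clause ``the strict monotonicity of $\arg f$ along each circle then also forces the one-to-one requirement.'' That implication is false as a logical step: a closed curve avoiding the origin whose argument is strictly increasing covers each ray from the origin exactly $N$ times, where $N$ is its winding number about $0$, and strict monotonicity does nothing to rule out $N\ge2$ (already $f(z)=z^{2}$, with $\arg f(re^{i\theta})=2\theta$ strictly increasing, double-covers its image circle). Since Definition~\ref{dfFS} explicitly requires the circle to be mapped ``in a one-to-one manner,'' you must prove $N=1$. Two standard ways to close this: (i) observe that your coefficient hypothesis forces $|f(z)|\ge|z|\,(1-\sum_{n\ge2}|a_n|-\sum_{n\ge1}|b_n|)>0$ on $0<|z|<1$ outside the degenerate linear case --- a fact you in any case need, and use tacitly, when you divide by $Q$ and speak of $\arg f$ --- so that $N(r)=\frac{1}{2\pi}\int_0^{2\pi}\partial_\theta\arg f\,d\theta$ is an integer depending continuously on $r$, and as $r\to0$ it equals the winding number of the ellipse $e^{i\theta}+\overline{b_1}e^{-i\theta}$, namely $1$; or (ii) prove univalence on all of $\ID$ directly, as Jahangiri does, from $|f(z_1)-f(z_2)|\ge|z_1-z_2|\,(1-\sum_{n\ge2}n|a_n|-\sum_{n\ge1}n|b_n|)$, the bracket being nonnegative because the hypothesis dominates $\sum n(|a_n|+|b_n|)\le1$. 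Finally, your parenthetical that the extremal map $f=z+\overline{b_1z}$ ``can be handled directly'' is not right under the paper's strict Definition~\ref{dfFS}: when $\frac{1+\alpha}{1-\alpha}|b_1|=1$ one gets $\partial_\theta\arg f=\alpha$ at isolated points (and for $\alpha=0$, $|b_1|=1$ every circle is collapsed onto a segment), so at this boundary configuration the conclusion holds only with $\ge\alpha$, as in Jahangiri's original formulation; it is a defect of the quoted statement, but it cannot be ``handled,'' only excluded or restated.
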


\begin{lemma}\label{lemFC}
Let $f=h+\overline{g}\in \mathcal{H}$, where $h$ and $g$ are given by~\eqref{phg}. Furthermore, let
$$\sum_{n=2}^{\infty}\frac{n(n-\alpha)}{1-\alpha} |a_{n}|+\sum_{n=1}^{\infty}\frac{n(n+\alpha)}{1-\alpha} |b_{n}|\leq 1$$
and $0 \leq \alpha < 1$. Then $f\in\mathcal{FC}_{H}(\alpha)$.
\end{lemma}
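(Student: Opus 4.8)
The plan is to translate the geometric requirement of Definition~\ref{dfFC} into the coefficient inequality of the hypothesis by the standard Jahangiri-type argument. Fix $r\in(0,1)$ and write $z=re^{i\theta}$, so that $\frac{\partial}{\partial\theta}=iz\frac{\partial}{\partial z}-i\overline{z}\frac{\partial}{\partial\overline{z}}$. First I would compute the two angular derivatives explicitly. Since $f_z=h'(z)$ and $f_{\overline z}=\overline{g'(z)}$, one finds $\frac{\partial}{\partial\theta}f(re^{i\theta})=i\big(zh'(z)-\overline{zg'(z)}\big)=:iQ$, and differentiating once more gives $\frac{\partial^2}{\partial\theta^2}f(re^{i\theta})=-\big(zh'(z)+z^2h''(z)+\overline{zg'(z)+z^2g''(z)}\big)=:-P$. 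Using $\frac{\partial}{\partial\theta}\arg W=\IM\!\big(W_\theta/W\big)$ with $W=iQ$ and $W_\theta=-P$, the simplification $W_\theta/W=iP/Q$ together with $\IM(i\zeta)=\RE(\zeta)$ yields
\[
\frac{\partial}{\partial\theta}\left(\arg\left(\frac{\partial}{\partial\theta}f(re^{i\theta})\right)\right)=\RE\left(\frac{P}{Q}\right),
\]
so that $f\in\mathcal{FC}_{H}(\alpha)$ is equivalent to $\RE(P/Q)>\alpha$ whenever $|z|=r<1$.

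Next I would reduce this to a single modulus inequality. For real $\alpha$ and any complex $w$ one has $\RE(w)>\alpha$ if and only if $|w+(1-\alpha)|>|w-(1+\alpha)|$; applying this with $w=P/Q$ and clearing $Q$, it suffices to prove $\big|P+(1-\alpha)Q\big|\ge\big|P-(1+\alpha)Q\big|$ for $|z|=r<1$. This avoids dividing by $Q$ and turns the problem into comparing two power series. Substituting the series~\eqref{phg} and collecting terms (the leading coefficients recombine, as I will verify, via $n(n+1-\alpha)+n(n-1-\alpha)=2n(n-\alpha)$ and $n(n-1+\alpha)+n(n+1+\alpha)=2n(n+\alpha)$) produces
\begin{align*}
P+(1-\alpha)Q&=(2-\alpha)z+\sum_{n=2}^{\infty}n(n+1-\alpha)a_nz^n+\overline{\alpha b_1z+\sum_{n=2}^{\infty}n(n-1+\alpha)b_nz^n},\\
P-(1+\alpha)Q&=-\alpha z+\sum_{n=2}^{\infty}n(n-1-\alpha)a_nz^n+\overline{(2+\alpha)b_1z+\sum_{n=2}^{\infty}n(n+1+\alpha)b_nz^n}.
\end{align*}

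Applying the triangle inequality to bound the first expression below (isolating the leading term $(2-\alpha)z$) and the second above, with $|z|=r$, the difference $\big|P+(1-\alpha)Q\big|-\big|P-(1+\alpha)Q\big|$ is at least
\[
2r\left[(1-\alpha)-\sum_{n=2}^{\infty}n(n-\alpha)|a_n|r^{n-1}-\sum_{n=1}^{\infty}n(n+\alpha)|b_n|r^{n-1}\right].
\]
Since $r<1$ forces $r^{n-1}\le1$, the bracket is bounded below by $(1-\alpha)-\sum_{n\ge2}n(n-\alpha)|a_n|-\sum_{n\ge1}n(n+\alpha)|b_n|$, which is non-negative precisely because of the hypothesis (after multiplying through by $1-\alpha$). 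Hence $\RE(P/Q)\ge\alpha$ on every circle $|z|=r<1$, which is the assertion $f\in\mathcal{FC}_{H}(\alpha)$.

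The main obstacle here is bookkeeping rather than conceptual: getting the second angular derivative and the coefficient recombination exactly right, since a sign or index slip would spoil the clean factors $2n(n\mp\alpha)$ that match the hypothesis. Two loose ends remain to be addressed. First, for $\arg(\partial_\theta f)$ to be meaningful I must know $Q\neq0$; this follows because $\frac{n-\alpha}{1-\alpha}\ge1$ and $\frac{n+\alpha}{1-\alpha}\ge1$ for $n\ge1$, so the hypothesis gives $\sum_{n\ge2}n|a_n|+\sum_{n\ge1}n|b_n|\le1$ and hence $|g'(z)|<|h'(z)|$ in $\ID$, making $f$ sense\nbd-preserving with $Q\neq0$. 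Second, to obtain the strict inequality $\RE(P/Q)>\alpha$ demanded by Definition~\ref{dfFC}, I would observe that $r^{n-1}<1$ strictly for $n\ge2$ when $0<r<1$, so the bracket is strictly positive except in the degenerate affine configuration $h(z)=z,\ g(z)=b_1z$, which maps circles to ellipses and is directly checked to be fully convex (this is exactly the extremal case governing sharpness). This completes the plan.
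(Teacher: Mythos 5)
The paper never proves Lemma~\ref{lemFC}; it is imported verbatim from Jahangiri \cite{Jahangiri1998,Jahangiri1999}, so your argument can only be measured against the classical proof --- and in substance it \emph{is} the classical proof. Your computations are correct: with $Q=zh'(z)-\overline{zg'(z)}$ and $P=zh'(z)+z^{2}h''(z)+\overline{zg'(z)+z^{2}g''(z)}$ one indeed has $\partial_{\theta}f=iQ$, $\partial_{\theta}^{2}f=-P$ and $\frac{\partial}{\partial\theta}\arg(\partial_{\theta}f)=\RE(P/Q)$; the equivalence $\RE w>\alpha\iff|w+(1-\alpha)|>|w-(1+\alpha)|$ is right; and the recombination $n(n+1-\alpha)+n(n-1-\alpha)=2n(n-\alpha)$, $n(n-1+\alpha)+n(n+1+\alpha)=2n(n+\alpha)$ does yield the lower bound $2r\bigl[(1-\alpha)-\sum_{n\ge2}n(n-\alpha)|a_n|r^{n-1}-\sum_{n\ge1}n(n+\alpha)|b_n|r^{n-1}\bigr]$.

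There is, however, a genuine gap exactly at the point you call a ``loose end,'' and your patch does not close it. Take $h(z)=z$, $g(z)=b_1z$ with $|b_1|=\frac{1-\alpha}{1+\alpha}$, $\alpha>0$, so the hypothesis holds with equality. Writing $b_1=|b_1|e^{i\beta}$, a direct computation gives
\[
\frac{\partial}{\partial\theta}\arg\bigl(\partial_{\theta}f(re^{i\theta})\bigr)
=\RE\left(\frac{1+|b_1|e^{-i(2\theta+\beta)}}{1-|b_1|e^{-i(2\theta+\beta)}}\right)
=\frac{1-|b_1|^{2}}{\bigl|1-|b_1|e^{-i(2\theta+\beta)}\bigr|^{2}},
\]
which equals $\alpha$ precisely when $e^{-i(2\theta+\beta)}=-1$, i.e.\ at two points of every circle. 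So this affine map is fully convex of order $0$ (the images are ellipses, and the displayed quantity is positive) --- which is all your ``direct check'' verifies --- but it is \emph{not} fully convex of order $\alpha$ in the sense of Definition~\ref{dfFC}, which demands the strict inequality $>\alpha$. In other words, your argument (like Jahangiri's) proves $\RE(P/Q)\ge\alpha$, with strictness whenever some $a_n$ or $b_n$ with $n\ge2$ is nonzero or $(1+\alpha)|b_1|<1-\alpha$; in the remaining extremal configuration the strict conclusion is simply false. The lemma is correct under Jahangiri's original definition (stated with $\ge\alpha$), and the discrepancy is really a mismatch between that definition and the strict one adopted in this paper; but as a proof of the statement as formulated here, this case cannot be absorbed by any argument.

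A second, smaller point: Definition~\ref{dfFC} also requires that $f$ map each circle $|z|=r$ one-to-one onto a convex curve, whereas the inequality $\RE(P/Q)>\alpha\ge0$ only says the tangent angle is strictly increasing; you assert the equivalence but use the direction you did not justify. To obtain it, note that $\int_{0}^{2\pi}\RE(P/Q)\,d\theta$ is the total change of $\arg Q$, i.e.\ $2\pi$ times the winding number of $Q$ about the origin, and this winding number is $1$: the homotopy $zh'-t\,\overline{zg'}$, $t\in[0,1]$, never vanishes on $|z|=r$ because $|g'|<|h'|$ (which you did establish), and $zh'$ has winding number $1$ since $h'$ is zero-free in $\ID$ by $|h'-1|<1$, again a consequence of the hypothesis. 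A strictly increasing tangent angle with total turning exactly $2\pi$ does produce a simple closed convex curve. This step is standard, but it belongs in the proof rather than in an asserted equivalence.
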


According to Rad\'{o}-Kneser-Choquet theorem, a fully convex harmonic mapping is
necessarily univalent in $\ID$. However, a fully starlike mapping need not be univalent (see~\cite{Chuaqui2004}).

For $f=h+\overline{g}\in \mathcal{S}_{H}^{0}$ we let
$$
D_f^{\epsilon}=zf_{z}-\epsilon\, \overline{z}f_{\overline{z}}\quad(|\epsilon|=1)
$$
and in the case of $\epsilon=1$,  we use the  notation $D_f^{+1}=Df$ for the sake of convenience.
As with the analytic differential operator $zf'(z)$, the operators $Df$ and $\mathscr{D}f:=D_f^{-1}$ play the important roles in harmonic mappings. In 1915, Alexander~\cite{Alexander1915}
proved that if $f\in\mathcal{S}$, then $f(z)\in \mathcal{C}$ if and only if $zf'(z)\in \mathcal{S}^{*}$.  In 1990, Sheil-Small~\cite{Sheil1990} gave one application of it to harmonic mappings as stated below.
\begin{thm}\label{thmAlex}
If $f=h+\overline{g}\in\mathcal{H}$ is univalent, and has a starlike range and if $H$ and $G$ are the analytic functions on $\ID$ defined by
$$zH'(z)=h(z),~ zG'(z)=-g(z),~ H(0)=G(0)=0,
$$
then $F=H+\overline{G}$ is univalent and has a convex range.
\end{thm}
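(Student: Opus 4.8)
The plan is to reduce the fully convexity of $F=H+\overline{G}$ to the (fully) starlikeness of $f=h+\overline{g}$ by exploiting the defining relations $zH'=h$ and $zG'=-g$ at the level of the boundary parametrization $\theta\mapsto F(re^{i\theta})$. The guiding principle is Definitions~\ref{dfFC} and~\ref{dfFS}: convexity of the image of $|z|=r$ is governed by $\frac{\partial}{\partial\theta}\arg\big(\frac{\partial}{\partial\theta}F\big)$, while starlikeness of $f$ is governed by $\frac{\partial}{\partial\theta}\arg f$. So it suffices to produce a clean identity linking $\frac{\partial}{\partial\theta}F$ with $f$ itself.

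The heart of the argument is the pointwise identity $\frac{\partial}{\partial\theta}F(re^{i\theta})=i\,f(re^{i\theta})$ for $z=re^{i\theta}\in\ID$. First I would write, using $\partial z/\partial\theta=iz$,
\[
\frac{\partial}{\partial\theta}F= H'(z)\,(iz)+\overline{G'(z)}\,\overline{(iz)} = iz\,H'(z)-i\overline{z}\,\overline{G'(z)} .
\]
Now $zH'(z)=h(z)$ gives $iz\,H'(z)=ih(z)$, while $\overline{z}\,\overline{G'(z)}=\overline{zG'(z)}=\overline{-g(z)}=-\overline{g(z)}$ gives $-i\overline{z}\,\overline{G'(z)}=i\overline{g(z)}$. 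Adding these yields $\frac{\partial}{\partial\theta}F=i\big(h(z)+\overline{g(z)}\big)=if(z)$, as claimed. (Note that $H'=h/z$ and $G'=-g/z$ are analytic at the origin, since $h,g$ vanish there, so $F$ is a genuine harmonic map with $H(0)=G(0)=0$.)

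Consequently $\arg\big(\frac{\partial}{\partial\theta}F\big)=\tfrac{\pi}{2}+\arg f$ modulo $2\pi$, and differentiating in $\theta$ cancels the constant:
\[
\frac{\partial}{\partial\theta}\arg\!\Big(\frac{\partial}{\partial\theta}F(re^{i\theta})\Big) =\frac{\partial}{\partial\theta}\arg f(re^{i\theta}).
\]
Since $f$ has starlike range, the argument $\theta\mapsto\arg f(re^{i\theta})$ is monotone (nondecreasing) on each circle $|z|=r$, so the right-hand side is nonnegative; by Definition~\ref{dfFC} this says precisely that the curve $\theta\mapsto F(re^{i\theta})$ is convex. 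Thus every circle is carried to a convex curve and $F$ has convex range. Geometrically, the tangent vector $\frac{\partial}{\partial\theta}F=if$ rotates monotonically exactly because $f$ winds monotonically about the origin.

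The main obstacle is the univalence (sense-preservation) of $F$, which the angle computation alone does not deliver. Here $J_F=|H'|^2-|G'|^2=(|h|^2-|g|^2)/|z|^2$, so I must separately verify $|g(z)|<|h(z)|$ in $\ID$; this is genuinely different from the sense-preservation $|g'|<|h'|$ of $f$ and needs an extra input. I would secure univalence in one of two ways: either establish $|g|<|h|$ directly from the starlikeness of $f$ (combining $\RE(Df/f)>0$ with $g(0)=h(0)=0$), or invoke the Rad\'o--Kneser--Choquet theorem — since the boundary curves $F(|z|=r)$ are convex Jordan curves described monotonically once (as the identity $\frac{\partial}{\partial\theta}F=if$ together with the univalence of $f$ shows), the harmonic function $F$ is automatically univalent and sense-preserving onto the enclosed convex region. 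I expect the cleaner route to be the latter, which reduces the entire statement to the boundary convexity already established.
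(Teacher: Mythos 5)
Before comparing: the paper itself contains no proof of Theorem~\ref{thmAlex}; it is quoted as a known result of Sheil-Small~\cite{Sheil1990}. So your argument has to stand on its own, and it does not. Your identity $\frac{\partial}{\partial\theta}F(re^{i\theta})=izH'(z)-i\,\overline{zG'(z)}=i\bigl(h(z)+\overline{g(z)}\bigr)=if(z)$ is correct, and it is indeed the right way to see why starlikeness of $f$ and convexity of $F$ are linked. The genuine gap is the next step: ``since $f$ has starlike range, $\theta\mapsto\arg f(re^{i\theta})$ is monotone on each circle $|z|=r$.'' That assertion is precisely the statement that $f$ is \emph{fully} starlike, and it does \emph{not} follow from the hypothesis, which only says that the single domain $f(\ID)$ is starlike and says nothing about the images of proper subdisks. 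For conformal maps starlikeness is hereditary (Schwarz's lemma applied to $f^{-1}\circ(tf)$), but for harmonic maps heredity fails: this failure is exactly the reason the classes $\mathcal{F}\mathcal{S}_{H}^{*}$ and $\mathcal{F}\mathcal{C}_{H}$ are introduced in this paper as proper subclasses, following Chuaqui, Duren and Osgood~\cite{Chuaqui2004}, where one finds harmonic mappings with convex (hence starlike) range whose restrictions to some concentric subdisks have non-convex, even non-starlike, images. On such circles $\partial_\theta\arg f(re^{i\theta})$ is negative on some arcs, and your circle-by-circle convexity argument collapses. What your computation honestly proves is a different implication: if $f$ is sense-preserving, univalent and \emph{fully} starlike, then $F$ is fully convex --- a statement with a strictly stronger hypothesis than Theorem~\ref{thmAlex}.

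Both of your proposed routes to univalence inherit the same gap, so neither repairs it. The Rad\'o--Kneser--Choquet route needs the curves $F(|z|=r)$ to be convex Jordan curves, which is the very point that fails; and the alternative route needs $\RE\bigl(Df(z)/f(z)\bigr)>0$ pointwise, which is again the fully-starlike condition (indeed $\partial_\theta\arg f(re^{i\theta})=\RE\bigl(Df(z)/f(z)\bigr)$), not a consequence of starlike range. A correct proof must exploit the starlikeness of $f(\ID)$ as a global property of the range rather than work circle by circle; Sheil-Small's original argument does this with the Clunie--Sheil-Small machinery of univalence and convexity in a fixed direction (shearing), applied to the analytic functions $H-e^{2i\alpha}G$, whose derivatives satisfy $z\bigl(H-e^{2i\alpha}G\bigr)'=h+e^{2i\alpha}g$ --- a genuinely different mechanism from tangent-monotonicity on circles.
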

In general, the converse of Theorem~\ref{thmAlex} is not necessarily true. For example, if $F=L$, where $L=H_2+\overline{G_2}\in\mathcal{C}_H^{0}$ is the harmonic right half-plane mapping defined by~\eqref{eqRL},
then it is easily see that
$$DL(z)=zH_2'(z)-\overline{z G_2'(z)}
$$
is not starlike and moreover, it is not even univalent in $\ID$ (see \cite[p.110]{Duren2004}). However, Ponnusamy and  Sairam Kaliraj~\cite{Samy2014} proved the converse theorem with an additional
condition as follows.

\begin{thm}\label{thmCAlex}
Suppose that $F = H +\overline{G}$ is a sense-preserving normalized convex mapping,
and $DF=zF_{z}-\overline{z}F_{\overline{z}}$ is sense-preserving in $\ID$. Then $DF$ is univalent and starlike in $\ID$.
\end{thm}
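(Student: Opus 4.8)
The plan rests on a single algebraic identity. Writing $z=re^{i\theta}$ and using $F_z=H'$, $F_{\overline z}=\overline{G'}$, a direct computation gives
\[
DF(z)=zH'(z)-\overline{z}\,\overline{G'(z)}=zH'(z)-\overline{zG'(z)},
\]
while differentiating $F(re^{i\theta})=H(z)+\overline{G(z)}$ in $\theta$ yields $\partial_\theta F(re^{i\theta})=i\big(zH'(z)-\overline{zG'(z)}\big)$. Hence
\[
DF(z)=-i\,\frac{\partial}{\partial\theta}F(re^{i\theta}),
\]
so $DF$ is, up to the fixed rotation by $-\pi/2$, exactly the angular tangent vector of the image curve $\theta\mapsto F(re^{i\theta})$. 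In particular $\arg DF(re^{i\theta})=\arg\!\big(\partial_\theta F(re^{i\theta})\big)-\tfrac{\pi}{2}$, and therefore
\[
\frac{\partial}{\partial\theta}\arg DF(re^{i\theta})=\frac{\partial}{\partial\theta}\arg\!\Big(\frac{\partial}{\partial\theta}F(re^{i\theta})\Big).
\]
This is the structural heart of the matter: the angular condition defining starlikeness of $DF$ coincides \emph{verbatim} with the one defining convexity of $F$.

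First I would record what convexity of $F$ buys us. Because $F$ is a sense-preserving convex mapping, the tangent direction along its boundary curve turns monotonically, i.e.\ the right-hand side above is nonnegative as $z$ approaches $\partial\ID$, with total turning $2\pi$ over one revolution; since the tangent of a convex curve never vanishes, $DF\neq0$ on the relevant circles and $\arg DF$ is in fact strictly increasing. Transported through the identity, this says the curve traced by $DF$ winds exactly once about the origin in a strictly monotone fashion, hence is a Jordan curve that is starlike with respect to $DF(0)=0$ (strict monotonicity of the argument with total increment $2\pi$ forces injectivity and makes every ray from the origin meet the curve once).

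The main step is then to upgrade this boundary/angular information to global univalence and starlikeness of $DF$ on $\ID$, and this is precisely where the hypothesis that $DF$ is sense-preserving is indispensable. Since $DF$ is a sense-preserving harmonic mapping whose boundary curve is a starlike Jordan curve of winding number $1$ about the origin, the argument principle for harmonic mappings (applied to $DF(z)-w_{0}$) shows that $DF$ attains every value interior to that curve exactly once and no exterior value; thus $DF$ is univalent and $DF(\ID)$ is the starlike region bounded by the curve. That the sense-preserving assumption cannot be dropped is exactly the content of the example $F=L$ in~\eqref{eqRL}, for which $DL$ fails even to be univalent.

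I expect the genuine difficulty to lie in making the passage to the boundary rigorous: a convex harmonic mapping need not extend smoothly to $\overline{\ID}$, and the intermediate circles $|z|=r$ need not map onto convex (or even starlike) curves, so one cannot simply read off tangent monotonicity on a fixed interior circle. The clean way around this is to phrase both the monotonicity and the argument-principle count in the limiting form $r\to1^{-}$, applying the harmonic argument principle on each $|z|<r$ and then letting $r\to1$, and to invoke the sense-preserving property of $DF$ to guarantee that the number of preimages equals the winding number rather than merely agreeing with it in parity.
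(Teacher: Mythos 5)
First, a point of reference: the paper does not prove Theorem~\ref{thmCAlex} at all; it is imported from Ponnusamy and Sairam Kaliraj \cite{Samy2014} as background, so your attempt can only be judged on its own merits. Your opening identity is correct: $DF(z)=-i\,\partial_\theta F(re^{i\theta})$, hence $\partial_\theta\arg DF(re^{i\theta})=\partial_\theta\arg\bigl(\partial_\theta F(re^{i\theta})\bigr)$, and sense-preservation of $F$ even guarantees $\partial_\theta F\neq 0$ on every circle $|z|=r<1$. But this identity converts \emph{full} convexity of $F$ (convexity of the images of all subcircles) into the angular starlikeness condition for $DF$, whereas the hypothesis only gives convexity of the single domain $F(\ID)$. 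For harmonic mappings convexity is not hereditary: this failure is precisely why the classes of fully convex and fully starlike mappings were introduced in \cite{Chuaqui2004}. Concretely, the half-plane mapping $L$ of \eqref{eqRL} is convex and sense-preserving, yet $L(|z|<r)$ fails to be convex for all $r$ close enough to $1$, so $\partial_\theta\arg\bigl(\partial_\theta L(re^{i\theta})\bigr)$ changes sign on such circles. Since your monotonicity step uses only convexity and sense-preservation of $F$ (sense-preservation of $DF$ enters only later, in the argument-principle count), that step is false as stated; what your outline would prove, if completed, is the weaker theorem in which ``convex'' is replaced by ``fully convex.''

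The proposed repair via $r\to 1^-$ does not close this gap. A convex harmonic mapping need not extend continuously to $\overline{\ID}$ (again $L$ is an example, with jumps in its boundary values), so ``tangent monotonicity of the boundary curve'' is not available; and even an almost-everywhere boundary statement could not be pulled back to the interior circles on which the harmonic argument principle must be applied---the failure of heredity \emph{is} the statement that boundary convexity does not propagate inward. What the identity does give unconditionally is this: since $F$ restricted to $|z|=r$ is a smooth, regular, positively oriented Jordan curve, Hopf's theorem on turning tangents shows that $DF|_{|z|=r}$ has winding number exactly $1$ about the origin, whence (by the argument principle and sense-preservation of $DF$) the only zero of $DF$ is at $z=0$. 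But winding number one about $0$ yields neither injectivity of $DF$ on circles nor winding number one about other values, which is what univalence and starlikeness require; note also that starlikeness, like convexity, is not hereditary, so the circle-wise starlikeness you aim for may exceed what is even true under these hypotheses. The proof in \cite{Samy2014} does not attempt circle-wise monotonicity; it works instead with the Clunie--Sheil-Small structure theory of convex harmonic mappings (directional convexity of the analytic shears of $H$ and $G$) combined with the sense-preserving hypothesis on $DF$. Some input of that kind, beyond the pointwise identity, appears unavoidable.
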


The above discussions show that the question of univalency of the harmonic differential operator $D_f^{\epsilon}$ is interesting.
In this paper, we study mainly the radius of univalence, fully convexity and fully starlikeness of the harmonic differential operators $D_f^{\epsilon}$ and $(1-\lambda)f+\lambda D_f^{\epsilon}$
when the coefficients of $h$ and $g$ satisfy either ~\eqref{BBHJ} or~\eqref{BBHJ2}. All these results are sharp.

\section{Radius of starlikeness and convexity of $D_f^{\epsilon}$}\label{Sect2}
In this section, we obtain the sharp radius of fully starlikeness and fully convexity of order $\alpha$ for the harmonic differential operator $D_f^{\epsilon}$.
The following identities are useful in the proof of our results:
\begin{lemma}\label{lemID}
We have
\begin{enumerate}
\item[(a)] $\displaystyle \sum_{n=2}^{\infty} n\,r^{n-1}=\frac{r(2-r)}{(1-r)^2}$,
\item[(b)] $\displaystyle\sum_{n=2}^{\infty} n^{2}r^{n-1}=\frac{r \left(4-3 r+r^2\right)}{(1-r)^3}$,
\item[(c)] $\displaystyle\sum_{n=2}^{\infty} n^{3}r^{n-1}=\frac{r \left(8-5 r+4 r^2-r^3\right)}{(1-r)^4}$,
\item[(d)] $\displaystyle\sum_{n=2}^{\infty} n^{4}r^{n-1}=\frac{r \left(16+r+11 r^2-5 r^3+r^4\right)}{(1-r)^5}$,
\item[(e)] $\displaystyle\sum_{n=2}^{\infty} n^{5}r^{n-1}=\frac{r \left(32+51 r+46 r^2-14 r^3+6 r^4-r^5\right)}{(1-r)^6}.$
\end{enumerate}
\end{lemma}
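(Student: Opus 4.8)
The plan is to reduce all five identities to repeated termwise differentiation of the geometric series, so that nothing beyond elementary power-series manipulation and polynomial algebra is needed. Throughout I take $|r|<1$ (in the applications $r\in(0,1)$), where $\sum_{n=0}^{\infty}r^{n}=\tfrac{1}{1-r}$ converges locally uniformly and may therefore be differentiated as often as we like term by term. Writing $T_k(r)=\sum_{n=2}^{\infty} n^{k}r^{n-1}$, the lemma asks precisely for closed forms of $T_1,\dots,T_5$, and I would obtain them all from one recurrence rather than treating each case afresh.

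First I would record the base case. Differentiating $\sum_{n=0}^{\infty}r^{n}=\tfrac{1}{1-r}$ once gives $\sum_{n=1}^{\infty} n\,r^{n-1}=\tfrac{1}{(1-r)^2}$, and subtracting the $n=1$ term ($=1$) yields part (a):
$$
T_1(r)=\frac{1}{(1-r)^2}-1=\frac{r(2-r)}{(1-r)^2}.
$$
Next I would establish the recurrence that drives the rest. Differentiating $T_k$ term by term,
$$
T_k'(r)=\sum_{n=2}^{\infty} n^{k}(n-1)\,r^{n-2}=\frac{1}{r}\bigl(T_{k+1}(r)-T_k(r)\bigr),
$$
so that
$$
T_{k+1}(r)=r\,T_k'(r)+T_k(r).
$$
A pleasant feature here is that differentiation preserves the lower summation limit $n=2$, so no boundary term has to be managed beyond the single base computation above. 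Starting from the closed form for $T_1$, one application of this recurrence produces (b), a second produces (c), and so on up to (e); at stage $k$ the function $T_k$ has denominator $(1-r)^{k+1}$, and the recurrence returns a rational function which, after collecting over the common denominator $(1-r)^{k+2}$, is the asserted expression. (Equivalently, one could simply differentiate the geometric series five successive times and strip off the $n=1$ term each time; the recurrence is just a tidier packaging of this.)

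The mathematical content of the lemma is thus entirely in the bookkeeping, and the only genuine obstacle is controlling the algebra at the top end. For (d) and (e) the recurrence forces one to differentiate $\tfrac{r(8-5r+4r^2-r^3)}{(1-r)^4}$ and $\tfrac{r(16+r+11r^2-5r^3+r^4)}{(1-r)^5}$ and then simplify, which generates degree-four and degree-five numerators over $(1-r)^5$ and $(1-r)^6$. To keep this reliable I would clear denominators at each stage and verify the resulting polynomial identity in $r$ by expanding and comparing coefficients, rather than propagating fractions through the quotient rule. Every step involves only termwise differentiation of a convergent power series together with finite polynomial algebra, so the argument is complete once the five simplifications are carried out.
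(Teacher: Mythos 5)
Your proposal is correct and follows essentially the same route as the paper, which likewise derives all five identities by repeatedly differentiating the geometric series starting from $\sum_{n=1}^{\infty} n\,r^{n}=r(1-r)^{-2}$. Your recurrence $T_{k+1}(r)=r\,T_k'(r)+T_k(r)$ is simply a cleaner packaging of the paper's ``differentiate this and so on,'' and your computed numerators all check out.
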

\begin{proof}
The proof of (a) follows from $\sum_{n=1}^{\infty} n\,r^{n}=r(1-r)^{-2}$ and the rest of them may be obtained by differentiating this and so on.
\end{proof}

For $0<r<1$ and $f\in h+\overline{g}\in \mathcal{H}$, let us define $D_f^{\epsilon,r}$ by
\begin{equation}\label{eqDfr}
D_f^{\epsilon,r}(z)=\frac{D_f^{\epsilon}(rz)}{r}=z+\sum_{n=2}^{\infty}n a_{n}r^{n-1}z^{n}
-\epsilon\,\overline{\sum_{n=2}^{\infty}n b_{n}r^{n-1}z^{n}}.
\end{equation}

\begin{theorem}\label{thmFS}
Let $f=h+\overline{g}$, where $h$ and $g$ are given by~\eqref{phg}, and the coefficients satisfy the conditions~\eqref{BBHJ}
for $n\geq 2$. Then for $D_f^{\epsilon}=zf_{z}-\epsilon\, \overline{z}f_{\overline{z}}\,\,(|\epsilon|=1)$,
\begin{itemize}
\item[(1)] the radius of fully starlikeness of order $\alpha$ is $r_{s}(\alpha)$, where $r_{s}(\alpha)$ is the unique root of the equation $p_{\alpha}(r)=0$ in the interval $(0,1)$, where
\begin{equation}\label{eqrS}
p_{\alpha}(r)=1-\alpha -(17-9 \alpha) r+(13-21 \alpha ) r^2-21(1- \alpha ) r^3+10(1-\alpha ) r^4-2(1- \alpha) r^5,
\end{equation}
\item[(2)] the radius of fully starlikeness is $r_u\approx 0.0614313$, where $r_u$ is the unique root of the equation
\begin{equation}\label{eqrS0}
1 -17 r+13 r^2-21 r^3+10 r^4-2 r^5= 0
\end{equation}
in the interval $(0,1)$.
\end{itemize}
All the results are sharp.
\end{theorem}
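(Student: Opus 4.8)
The plan is to reduce the problem to the Jahangiri-type sufficient condition of Lemma~\ref{lemFS}, applied to the rescaled operator $D_f^{\epsilon,r}$ of \eqref{eqDfr}, and then to exploit a cancellation that collapses the resulting series into the closed form $p_\alpha$ of \eqref{eqrS}. Writing $D_f^{\epsilon,r}=H+\overline{G}$ with $H(z)=z+\sum_{n\ge2}A_nz^n$ and $G(z)=\sum_{n\ge2}B_nz^n$, I read off from \eqref{eqDfr} that $|A_n|=n|a_n|r^{n-1}$ and $|B_n|=n|b_n|r^{n-1}$; the unimodular factor $\epsilon$ disappears in moduli, and $B_1=0$. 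By Lemma~\ref{lemFS}, $D_f^{\epsilon}$ is fully starlike of order $\alpha$ in $|z|<r$ provided
\[
S(r):=\sum_{n\ge2}\frac{n-\alpha}{1-\alpha}\,n|a_n|r^{n-1}+\sum_{n\ge2}\frac{n+\alpha}{1-\alpha}\,n|b_n|r^{n-1}\le 1.
\]

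Next I would majorize $S(r)$ using the extremal bounds \eqref{BBHJ}, $|a_n|\le\frac{(2n+1)(n+1)}{6}$ and $|b_n|\le\frac{(2n-1)(n-1)}{6}$. The decisive observation is the cancellation
\[
n(n-\alpha)(2n+1)(n+1)+n(n+\alpha)(2n-1)(n-1)=4n^4+(2-6\alpha)n^2,
\]
in which the odd-degree terms in $n$ vanish. Hence $6(1-\alpha)S(r)=\sum_{n\ge2}\bigl[4n^4+(2-6\alpha)n^2\bigr]r^{n-1}$, and substituting the closed forms (b) and (d) of Lemma~\ref{lemID} and clearing the factor $(1-r)^5>0$ converts $S(r)\le1$ into the inequality $p_\alpha(r)\ge0$ with $p_\alpha$ exactly as in \eqref{eqrS}.

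It then remains to locate the threshold. Since each coefficient $4n^4+(2-6\alpha)n^2=4n^2(n^2-1)+6(1-\alpha)n^2>0$ for $n\ge2$ and $0\le\alpha<1$, the map $S$ is strictly increasing on $(0,1)$ with $S(0^+)=0$ and $S(r)\to\infty$ as $r\to1^-$; equivalently $p_\alpha(0)=1-\alpha>0$ while $p_\alpha(1)=-16<0$. Thus $p_\alpha$ has a unique root $r_s(\alpha)\in(0,1)$, and $S(r)\le1$ holds precisely for $r\le r_s(\alpha)$, which proves (1); statement (2) is the specialization $\alpha=0$, for which $p_0$ is the quintic \eqref{eqrS0} with numerical root $r_u\approx0.0614313$.

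The step demanding the most care is sharpness. Here I would test the harmonic Koebe function $K=H_1+\overline{G_1}$ of \eqref{eqHC}, whose coefficients attain equality in \eqref{BBHJ}, choosing the sign of $\epsilon$ so that the boundary values align; for this $f$ every majorization above is an equality, so $6(1-\alpha)S(r)=\sum_{n\ge2}[4n^4+(2-6\alpha)n^2]r^{n-1}$ identically. I would then evaluate the directional derivative $\frac{\partial}{\partial\theta}\arg D_f^{\epsilon,r}(e^{i\theta})$ at the critical boundary point on the real axis and show that fully starlikeness of order $\alpha$ genuinely fails for every $r>r_s(\alpha)$. Confirming that the extremal $\theta$ is precisely where the Jahangiri estimate is saturated---rather than merely that the sufficient condition becomes tight---is the delicate point, since Lemma~\ref{lemFS} is only sufficient; everything preceding it is a mechanical, if lengthy, computation powered by Lemma~\ref{lemID}.
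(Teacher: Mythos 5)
Your sufficiency argument is correct and coincides with the paper's proof: the same reduction to Lemma~\ref{lemFS} applied to $D_f^{\epsilon,r}$, the same cancellation $n(n-\alpha)(2n+1)(n+1)+n(n+\alpha)(2n-1)(n-1)=4n^4+(2-6\alpha)n^2$, and the same conversion via Lemma~\ref{lemID} into $p_\alpha(r)\ge 0$. Your uniqueness-of-root argument is actually cleaner than the paper's: you observe that the majorant series is strictly increasing in $r$ (all coefficients positive), so $p_\alpha$, which equals $3(1-\alpha)(1-r)^5$ times $(1-T_1(r))$ up to a positive factor, can vanish at most once in $(0,1)$; the paper instead differentiates $p_\alpha$ and runs a two-case analysis on an auxiliary quadratic $q_\alpha$. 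Either route is fine.

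The genuine gap is in sharpness, and you correctly sensed where: your plan to use the harmonic Koebe function $K=H_1+\overline{G_1}$ itself, ``choosing the sign of $\epsilon$ so that the boundary values align,'' cannot be carried out. The parameter $\epsilon$ multiplies only the co-analytic part, so the analytic part of $D_K^{\epsilon}$ is $zH_1'(z)$ with all coefficients $na_n>0$ regardless of $\epsilon$; consequently at $\theta=0$ one computes, for $\epsilon=\pm1$, a numerator $r\bigl[1+\sum_{n\ge2}n^2(a_n\pm b_n)r^{n-1}\bigr]>0$ and a positive denominator, so the argument-derivative never degenerates on the positive real axis, and there is no point of $|z|=r$ where the Jahangiri estimate is visibly saturated --- indeed it is not even clear that $K$ fails to be fully starlike of order $\alpha$ just beyond $r_s(\alpha)$. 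The paper's device is to change the extremal function: since \eqref{BBHJ} constrains only moduli, it takes $F=H+\overline{G}$ with $H(z)=2z-H_1(z)$ and $G(z)=-G_1(z)$, i.e.\ it flips the signs of all coefficients with $n\ge2$. Then every term in the estimate aligns at $\theta=0$, and a direct computation gives
\begin{equation*}
\frac{\partial}{\partial\theta}\arg\bigl(DF(re^{i\theta})\bigr)\Big|_{\theta=0}
=\frac{1-17r+13r^2-21r^3+10r^4-2r^5}{1-9r+21r^2-21r^3+10r^4-2r^5},
\end{equation*}
which equals $\alpha$ exactly at $r=r_s(\alpha)$, proving part (1) sharp. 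For part (2) the paper additionally shows the Jacobian $J_{DF}(r)$ is negative for $r_u<r<r_u^{*}$, so by Lewy's theorem $DF$ is not even locally univalent there; this matters because $\mathcal{F}\mathcal{S}_{H}^{0*}(\alpha)\subset\mathcal{S}_H$ requires univalence. Without this sign-flipped extremal (or an equivalent device), your sharpness claim remains unproved.
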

\begin{proof}
By the assumption $h$ and $g$ have the form~\eqref{phg} and the coefficients of the series satisfy the conditions~\eqref{BBHJ}. First, we observe that $b_{1}=g'(0)=0$. The conditions~\eqref{BBHJ} imply that the series~\eqref{phg} are convergent in $\ID$, and hence, $h$ and $g$ are analytic in $\ID$. Thus, $f=h+\overline{g}\in \mathcal{H}_0$ for $0 < r < 1$, it suffices to show that $D_f^{\epsilon,r}$ defined by~\eqref{eqDfr} with $b_1=0$ belongs to $\mathcal{F}\mathcal{S}_{H}^{0*}(\alpha)$.

Now, we consider
\begin{equation*}
\begin{split}
S_1&=\sum_{n=2}^{\infty}\frac{n-\alpha}{1-\alpha}|n a_{n}|r^{n-1}+\sum_{n=2}^{\infty}\frac{n+\alpha}{1-\alpha}|n b_{n}|r^{n-1}\\
&\leq \sum_{n=2}^{\infty}\frac{n-\alpha}{1-\alpha}\left(\frac{n(2n+1)(n+1)}{6}\right)r^{n-1}
+\sum_{n=2}^{\infty}\frac{n+\alpha}{1-\alpha}\left(\frac{n(2n-1)(n-1)}{6}\right)r^{n-1}\\
&=\sum_{n=2}^{\infty}\frac{n^2 \left(1-3 \alpha +2 n^2\right)}{3(1-\alpha) }r^{n-1}\\
&=\frac{1}{3(1-\alpha)}\left[(1-3 \alpha )\frac{r \left(4-3r+r^2\right)}{(1-r)^3}+\frac{2r \left(16+r+11 r^2-5 r^3+r^4\right)}{(1-r)^5}\right]=:T_1,
\end{split}
\end{equation*}
where we have used Lemma~\ref{lemID}(a) and (d) in the last equality. By Lemma \ref{lemFS}, it suffices to show that $T_1\leq 1$ which holds whenever
$p_{\alpha}(r)\geq 0$, where $p_{\alpha}(r)$ is defined by~\eqref{eqrS}.

Now we shall show that the polynomial $p_{\alpha}(r)$ defined by \eqref{eqrS} has exactly one zero in the interval $(0,1)$ for every $\alpha\in[0,1)$. Since $p_{\alpha}(0)=1-\alpha>0$ and
$p_{\alpha}(1)=-16<0$, so $p_{\alpha}(r)$ has at least one zero in the interval $(0,1)$. A straightforward calculation shows that
\begin{equation*}
\begin{split}
p'_{\alpha}(r)&=9 \alpha-17+2 (13-21 \alpha ) r -63 (1-\alpha ) r^2+40 (1-\alpha ) r^3-10 (1-\alpha ) r^4\\
&=9 \alpha-17+2 (13-21 \alpha ) r- (1-\alpha ) r^2\left[23+10(r-2)^2\right]\quad\mbox{(as $0<r<1$)}\\
&< 9 \alpha-17+2 (13-21 \alpha ) r- 33(1-\alpha ) r^2=:q_{\alpha}(r).
\end{split}
\end{equation*}
It suffices to show that $q_{\alpha}(r)$ is negative in the interval $r\in(0,1)$ for every $\alpha\in[0,1)$. Moreover,
$$q'_{\alpha}(r)=2\left[(13-21\alpha)-33(1-\alpha)r\right]
$$
which gives the critical point $r_{0}=\frac{13-21\alpha}{33(1-\alpha)}$ and $q'_{\alpha}(r)>0$ for $0\leq r<r_0$ and $q'_{\alpha}(r)<0$ for $r_0<r<1$.
Clearly, we need to deal with the cases $0\leq \alpha\leq 13/21$ and
$13/21< \alpha<1$.
\begin{case}
Let $0\leq \alpha\leq 13/21$. We compute that
$$q_{\alpha}(r_{0})=\frac{8(18\alpha^2+39\alpha-49)}{33(1-\alpha)}<0\quad\mbox{for}\quad 0\leq \alpha\leq \frac{13}{21}.
$$
This implies that $q_{\alpha}(r)<0$ in the interval $r\in(0,1)$ for all $0\leq \alpha\leq 13/21$.
\end{case}
\begin{case}
Let $13/21< \alpha<1$. Then we have $q'_{\alpha}(r)<0$ for $r\in [0,1)$. Also, since $q_{\alpha}(0)=9 \alpha -17<0$ and $q_{\alpha}(1)=-24<0$, we
find that $q_{\alpha}(r)<0$ in the interval $r\in(0,1)$ and for all $13/21< \alpha<1$.
\end{case}
Combining the last two cases, we conclude that $q_{\alpha}(r)<0$ in the interval $(0,1)$ and for all $\alpha\in [0,1)$. This proves $p'_{\alpha}(r)<q_{\alpha}(r)<0$ in the interval $(0,1)$ and for all $\alpha\in [0,1)$. Thus, $D_f^{\epsilon,r}\in\mathcal{FS}^{0*}_{H}(\alpha)$ for $r\leq r_s(\alpha)$, where $r_s(\alpha)$ is the unique real root of the equation \eqref{eqrS}.

Note that the roots of the equation \eqref{eqrS} in $(0,1)$ are decreasing as a function of $\alpha$, $0\leq \alpha<1$. Consequently, $r_{s}\leq r_{u}$. Note that for $\alpha=0$, Equation~\eqref{eqrS} reduces to ~\eqref{eqrS0}. Then, by Lemma~\ref{lemFS}, we see that the harmonic function $f$ is starlike and
univalent in $|z|\leq r_{u}\approx 0.0614313$, where $r_u$ is the unique root of the equation~\eqref{eqrS0} in the interval $(0,1)$.

Next, to prove the sharpness, we consider the function
$$DF(z)=zH'(z)-\overline{zG'(z)}=H_{d}(z)+\overline{G_{d}(z)},
$$
where
$$H(z)=2z-H_{1}(z)\quad{\rm and}\quad G(z)=-G_{1}(z).
$$
Here, $H_{1}(z)$ and $G_{1}(z)$ are defined by~\eqref{eqHC}. We note that
\begin{equation*}
\begin{split}
DF(z)=z\left(2-H'_{1}(z)\right)+\overline{z G'_{1}(z)}
=z\left(2-\frac{1+z}{(1-z)^4}\right)+\overline{\left (\frac{z^2(1+z)}{(1-z)^4}\right )}.
\end{split}
\end{equation*}
Direct computation leads to
\begin{equation*}
H'_{d}(r)=2-\frac{2 r^2+5 r+1}{(1-r)^5}\quad{\rm and}\quad G'_{d}(r)=\frac{r \left(r^2+5 r+2\right)}{(1-r)^5}.
\end{equation*}
As $DF(r)$ has real coefficients, we have
\begin{equation*}
\begin{split}
J_{DF}(r)&=\left(H'_{d}(r)+G'_{d}(r)\right)\left(H'_{d}(r)-G'_{d}(r)\right)\\
&=\left(2-\frac{2 r^2+5 r+1}{(1-r)^5}+\frac{r \left(r^2+5 r+2\right)}{(1-r)^5}\right)
\left(2-\frac{2 r^2+5 r+1}{(1-r)^5}-\frac{r \left(r^2+5 r+2\right)}{(1-r)^5}\right)\\
&=\frac{\left(1 - 13 r + 23 r^2 - 19 r^3 + 10 r^4 - 2 r^5\right)\left(1 - 17 r + 13 r^2 - 21 r^3 + 10 r^4 - 2 r^5\right)}{(1-r)^5}.
\end{split}
\end{equation*}
Thus, $J_{DF}(r)=0$ in $(0,1)$ if and only if
$r=r_{u}\approx 0.0614313 $ or $r=r^{*}_{u}=0.0903331$, where $r_{u}$ and $r^{*}_{u}$ are the roots of
$$1 - 13 r + 23 r^2 - 19 r^3 + 10 r^4 - 2 r^5=0 ~\mbox{ and }~1 - 17 r + 13 r^2 - 21 r^3 + 10 r^4 - 2 r^5=0
$$
in the interval $(0,1)$, respectively. Moreover, for $r_{u}<r<r^{*}_{u}$, we have $J_{DF}(r)<0$. The graph of the function $J_{DF}(r)$ for $r\in(0,0.15)$ is shown in Figure~\ref{fR}. Therefore, in view of Lewy's theorem, the function $DF(z)$ is not univalent in $|z|<1$ if $r>r_{u}$. This shows that $r_{u}$ is sharp.

Furthermore,
\begin{equation*}
\begin{split}
\frac{\partial}{\partial\theta}\left(\arg\left(
DF(re^{i\theta})\right)\right)\bigg|_{\theta=0}&=\frac{rH'_{d}(r)-rG'_{d}(r)}{H_{d}(r)+G_{d}(r)}\\
&=\frac{1-17 r+13 r^2-21 r^3+10 r^4-2 r^5}{1-9 r+21 r^2-21 r^3+10 r^4-2 r^5}.
\end{split}
\end{equation*}
Thus, by~\eqref{eqrS} and $p_{\alpha}(r_{s}(\alpha))=0$, we have
$$\frac{\partial}{\partial\theta}\left(\arg\left(
DF(re^{i\theta})\right)\right)\bigg|_{\theta=0,r=r_{s}(\alpha)}
=\alpha.
$$
This shows that $r_{s}(\alpha)$ is the best possible.
\end{proof}
\begin{figure}[!h]
  \centering
        \includegraphics[height=4.0in,keepaspectratio]{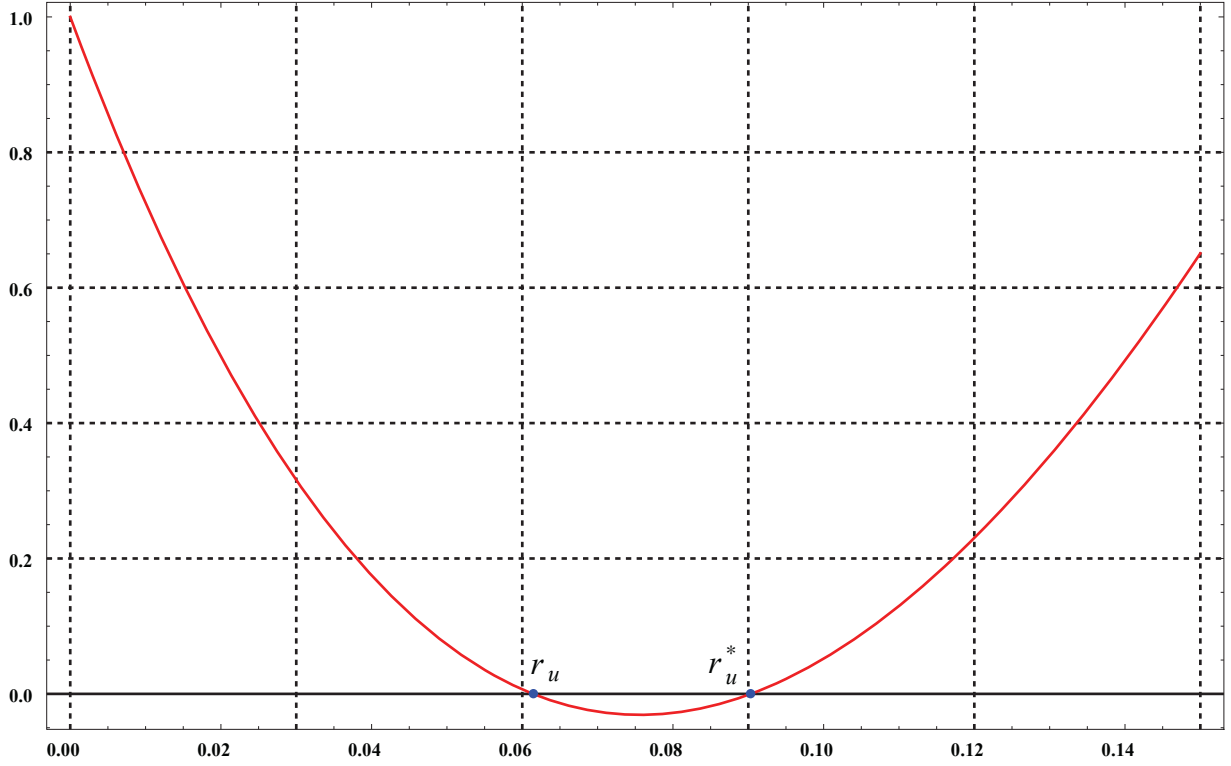}
        \caption{The roots of the Jacobian $J_{DF}(r)$ for $r\in(0,0.15)$.}\label{fR}
\end{figure}

A slight change in the proof of Theorem~\ref{thmFS} with the help of Lemma~\ref{lemFC} yields the radius of fully convex for $D_f^{\epsilon}$. So the proof is omitted here.
\begin{theorem}\label{thmFC}
Under the hypothesis of Theorem~\ref{thmFS}, for $D_f^{\epsilon}=zf_{z}-\epsilon\, \overline{z}f_{\overline{z}}\,\,(|\epsilon|=1)$ is fully convex of order $\alpha$ in $|z|<r_{c}(\alpha)$, where $r_{c}(\alpha)$ is the unique root of the equation
$$
1-\alpha -2(15-7 \alpha) r-12(1+3\alpha) r^2-2(29-21\alpha) r^3+29(1- \alpha ) r^4-12(1-\alpha) r^5+2(1- \alpha ) r^6= 0
$$
in the interval $(0,1)$. In particular, $D_f^{\epsilon}$ is fully convex in $|z|<r_{c}\approx 0.0328348$, where $r_{c}$ is the unique root of the equation
$$
1-30 r-12 r^2-58 r^3+29 r^4-12 r^5+2 r^6=0
$$
in the interval $(0,1)$. All results are sharp.
\end{theorem}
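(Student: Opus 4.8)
The plan is to run the proof of Theorem~\ref{thmFS} essentially verbatim, replacing the fully starlike test of Lemma~\ref{lemFS} by the fully convex test of Lemma~\ref{lemFC}; the only structural change is that each coefficient weight gains one extra power of $n$. As before, $b_1=0$ and the series converge, so it suffices to show that $D_f^{\epsilon,r}$ of~\eqref{eqDfr} lies in $\mathcal{FC}_H^{0}(\alpha)$. Applying Lemma~\ref{lemFC} to the coefficients $na_nr^{n-1}$ and $nb_nr^{n-1}$ and then the bounds~\eqref{BBHJ}, I would estimate
\[
S_2=\sum_{n=2}^{\infty}\frac{n^2(n-\alpha)}{1-\alpha}|a_n|r^{n-1}+\sum_{n=2}^{\infty}\frac{n^2(n+\alpha)}{1-\alpha}|b_n|r^{n-1}.
\]
The decisive point is that the identity already used for $S_1$, namely $(n-\alpha)(2n+1)(n+1)+(n+\alpha)(2n-1)(n-1)=4n^3+2(1-3\alpha)n$, is now multiplied by $n^2/[6(1-\alpha)]$ rather than $n/[6(1-\alpha)]$, so the summand becomes $[2n^5+(1-3\alpha)n^3]/[3(1-\alpha)]$, one power of $n$ higher than in $S_1$. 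Hence, invoking Lemma~\ref{lemID}(c) and (e) in place of (a) and (d),
\[
S_2\le\frac{1}{3(1-\alpha)}\left[(1-3\alpha)\frac{r(8-5r+4r^2-r^3)}{(1-r)^4}+\frac{2r(32+51r+46r^2-14r^3+6r^4-r^5)}{(1-r)^6}\right]=:T_2 .
\]
Clearing $(1-r)^6$ in $T_2\le 1$ and collecting terms, one checks (e.g.\ by matching the constant, linear, and top coefficients) that $T_2\le1$ is equivalent to $P_\alpha(r)\ge0$, where $P_\alpha$ is the degree-six polynomial in the statement; by Lemma~\ref{lemFC} this already yields $D_f^{\epsilon,r}\in\mathcal{FC}_H^{0}(\alpha)$ whenever $P_\alpha(r)\ge0$.

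It then remains to prove that $P_\alpha$ has exactly one zero $r_c(\alpha)$ in $(0,1)$. Since $P_\alpha(0)=1-\alpha>0$ and $P_\alpha(1)=-80<0$, a root exists; for uniqueness I would show $P_\alpha$ is strictly decreasing, by the device used for $p_\alpha$ in Theorem~\ref{thmFS}. Writing
\[
P'_\alpha(r)=-2(15-7\alpha)-24(1+3\alpha)r-6(29-21\alpha)r^2+(1-\alpha)r^3\bigl(116-60r+12r^2\bigr),
\]
I would bound the last group on $(0,1)$ using $116-60r+12r^2<116$ and $r^3<r^2$, which gives
\[
P'_\alpha(r)<-2(15-7\alpha)-24(1+3\alpha)r-(58-10\alpha)r^2 .
\]
All three coefficients $15-7\alpha$, $1+3\alpha$ and $58-10\alpha$ are positive for $\alpha\in[0,1)$, so the right-hand side is negative on $(0,1)$; hence $P'_\alpha<0$ and the root is unique. (Pleasantly, no splitting into $\alpha$-ranges is needed here, in contrast to the starlikeness case.) Setting $\alpha=0$ reduces $P_\alpha$ to $1-30r-12r^2-58r^3+29r^4-12r^5+2r^6$, whose root is $r_c\approx0.0328348$.

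For sharpness the extremal map is \emph{not} the one in Theorem~\ref{thmFS}: the first-order (starlikeness) computation forces the conjugate coefficients to satisfy $\epsilon b_n=-|b_n|$, whereas the second-order (convexity) computation forces the opposite sign $\epsilon b_n=+|b_n|$. Accordingly I would take $H=2z-H_1$ and $G=G_1$ with $\epsilon=1$, so that
\[
DF(z)=z\bigl(2-H_1'(z)\bigr)-\overline{zG_1'(z)}=H_d(z)+\overline{G_d(z)},
\]
with $H_d'(r)=2-(2r^2+5r+1)/(1-r)^5$ and $G_d'(r)=-r(r^2+5r+2)/(1-r)^5$ real. Using $\partial_\theta^2 DF=-w(H_d'+wH_d'')-\overline{w(G_d'+wG_d'')}$ and $\partial_\theta DF=iwH_d'-i\bar w\,\overline{G_d'}$, at $\theta=0$ the convexity quantity reduces to the real ratio
\[
\frac{\partial}{\partial\theta}\left(\arg\left(\frac{\partial}{\partial\theta}DF(re^{i\theta})\right)\right)\bigg|_{\theta=0}=\frac{(H_d'(r)+rH_d''(r))+(G_d'(r)+rG_d''(r))}{H_d'(r)-G_d'(r)} .
\]
Computing the numerator as $\tfrac{d}{dr}\bigl[r(H_d'+G_d')\bigr]$ and clearing $(1-r)^6$ turns it into $P_0(r)$, while the denominator becomes $1-14r+36r^2-42r^3+29r^4-12r^5+2r^6$; setting the ratio equal to $\alpha$ then gives $P_0(r)-\alpha(1-14r+36r^2-42r^3+29r^4-12r^5+2r^6)=P_\alpha(r)=0$, so $r_c(\alpha)$ cannot be enlarged. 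The main obstacle I anticipate is precisely this sign bookkeeping in the sharpness step: with the starlikeness sign the same computation collapses to a spurious degree-five equation, so one must use the convexity-correct extremal $G=G_1$ (equivalently $\epsilon=-1$, $G=-G_1$) to land exactly on $P_\alpha$. The monotonicity argument for $P_\alpha$, by comparison, is routine once the dominating quadratic above is identified.
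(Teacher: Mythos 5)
Your proposal is correct and follows exactly the route the paper prescribes: the paper omits the proof of Theorem~\ref{thmFC}, saying only that it is a ``slight change'' of the proof of Theorem~\ref{thmFS} via Lemma~\ref{lemFC}, and your write-up is precisely that adaptation, with all computations checking out --- the reduction of $T_2\le 1$ via Lemma~\ref{lemID}(c),(e) to the stated degree-six polynomial $P_\alpha$, the monotonicity bound (which, as you note, needs no case-splitting in $\alpha$), and the sharpness ratio. In particular, you correctly identified the one non-routine point hidden in the phrase ``slight change'': the extremal map must flip the sign of the co-analytic part (take $G_d=-zG_1'$ rather than $+zG_1'$ as in Theorem~\ref{thmFS}), since only then does the convexity quantity at $\theta=0$ equal $P_0(r)\big/\left(1-14r+36r^2-42r^3+29r^4-12r^5+2r^6\right)$, which hits $\alpha$ exactly at the root of $P_\alpha$, whereas the starlike-signed extremal collapses to a degree-five expression.
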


\begin{theorem}\label{thmFSR}
Let $f=h+\overline{g}$, where $h$ and $g$ are given by~\eqref{phg}, and the coefficients satisfy the conditions~\eqref{BBHJ2}
for $n\geq 2$. Then for $D_f^{\epsilon}=zf_{z}-\epsilon\, \overline{z}f_{\overline{z}}\,\,(|\epsilon|=1)$,
\begin{itemize}
\item[(1)] the radius of full starlikeness of order $\alpha$ is $r_{s}(\alpha)$, where $r_{s}(\alpha)$ is the unique root of the equation $s_{\alpha}(r)=0$ in the interval $(0,1)$, where
\begin{equation}\label{eqrSR}
s_{\alpha}(r)=1-\alpha -6(2-\alpha) r+11(1-\alpha ) r^2- 8(1-\alpha) r^3+2(1-\alpha ) r^4,
\end{equation}

\item[(2)] the (univalent) radius of fully starlikeness is $r_u\approx 0.0903331$, where $r_u$ is the unique root of the equation
\begin{equation}\label{eqrSR0}
1 - 12 r + 11 r^2 - 8 r^3 + 2 r^4= 0
\end{equation}
in the interval $(0,1)$.
\end{itemize}
All the results are sharp.
\end{theorem}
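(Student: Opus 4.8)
The plan is to run the proof of Theorem~\ref{thmFS} essentially verbatim, replacing the coefficient bounds \eqref{BBHJ} by the convex bounds \eqref{BBHJ2} and again invoking Jahangiri's criterion (Lemma~\ref{lemFS}). Since $b_1=g'(0)=0$, the bounds \eqref{BBHJ2} force $h,g$ to be analytic in $\ID$, so $f\in\mathcal{H}_0$; fixing $r\in(0,1)$, it suffices to show that the rescaled operator $D_f^{\epsilon,r}$ of \eqref{eqDfr}, whose analytic and co-analytic coefficients are $na_nr^{n-1}$ and $-\epsilon\,nb_nr^{n-1}$, lies in $\mathcal{FS}^{0*}_{H}(\alpha)$. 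Feeding $|a_n|\le (n+1)/2$ and $|b_n|\le (n-1)/2$ into Lemma~\ref{lemFS} and using the identity $(n-\alpha)(n+1)+(n+\alpha)(n-1)=2(n^2-\alpha)$, the controlling sum becomes
$$
S_1=\sum_{n=2}^{\infty}\frac{n-\alpha}{1-\alpha}|na_n|r^{n-1}+\sum_{n=2}^{\infty}\frac{n+\alpha}{1-\alpha}|nb_n|r^{n-1}\le\sum_{n=2}^{\infty}\frac{n(n^2-\alpha)}{1-\alpha}\,r^{n-1}=:T_1 .
$$

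Next I would evaluate $T_1$ with Lemma~\ref{lemID}(c) and (a) (here the weights produce $n^3$ and $n$, in contrast to the $n^2,n^4$ of Theorem~\ref{thmFS}):
$$
T_1=\frac{1}{1-\alpha}\left[\frac{r(8-5r+4r^2-r^3)}{(1-r)^4}-\alpha\,\frac{r(2-r)}{(1-r)^2}\right],
$$
and clearing $(1-r)^4$ shows at once that $T_1\le 1$ is equivalent to $s_\alpha(r)\ge 0$, with $s_\alpha$ exactly the quartic \eqref{eqrSR}. For the unique root I would note $s_\alpha(0)=1-\alpha>0$, $s_\alpha(1)=-6<0$, and $s'_\alpha(r)=-6(2-\alpha)+2(1-\alpha)r(4r^2-12r+11)$; since $4r^2-12r+11>0$ (negative discriminant) and $r(4r^2-12r+11)<4$ on $(0,1)$, one gets $s'_\alpha(r)<-6(2-\alpha)+8(1-\alpha)=-4-2\alpha<0$. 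Thus $s_\alpha$ is strictly decreasing and has a single zero $r_s(\alpha)$, and $D_f^{\epsilon,r}\in\mathcal{FS}^{0*}_{H}(\alpha)$ for $r\le r_s(\alpha)$. Setting $\alpha=0$ reduces $s_\alpha$ to \eqref{eqrSR0}, giving $r_u\approx 0.0903331$; monotonicity of the root in $\alpha$ yields $r_s(\alpha)\le r_u$, which is the claim in part~(2).

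The genuinely delicate step — and the one I expect to be the main obstacle — is the sharpness, specifically getting the extremal \emph{sign pattern} right. I would take $f=h+\overline{g}$ with $h(z)=2z-H_2(z)$ and $g(z)=G_2(z)$, where $H_2,G_2$ are from the right half-plane mapping $L$ of \eqref{eqRL}; this $f$ attains equality in \eqref{BBHJ2}. The subtle point is that one must keep $g=+G_2$ (so $b_n=-(n-1)/2$), \emph{not} $-G_2$: because $G_2$ already carries negative coefficients, $+G_2$ is the correct analogue of the choice $g=-G_1$ used in Theorem~\ref{thmFS}, and it is precisely this sign that routes the polynomial $s_0(r)$ into the ``$H_d'-G_d'$'' combination rather than leaving the arg-derivative identically equal to $1$. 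Using $H_2'(z)=(1-z)^{-3}$ and $G_2'(z)=-z(1-z)^{-3}$, one finds $D_f^{+1}=DF=H_d+\overline{G_d}$ with $H_d(z)=2z-\frac{z}{(1-z)^3}$ and $G_d(z)=\frac{z^2}{(1-z)^3}$, whence
$$
H_d'(r)-G_d'(r)=\frac{1-12r+11r^2-8r^3+2r^4}{(1-r)^4},\qquad H_d'(r)+G_d'(r)=\frac{1-7r+6r^2-2r^3}{(1-r)^3},
$$
so that $J_{DF}(r)=\dfrac{(1-12r+11r^2-8r^3+2r^4)(1-7r+6r^2-2r^3)}{(1-r)^{7}}$ vanishes and changes sign at $r_u$ (the second factor staying positive there), which proves $r_u$ sharp for univalence via Lewy's theorem. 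Finally, computing the boundary direction,
$$
\frac{\partial}{\partial\theta}\left(\arg DF(re^{i\theta})\right)\bigg|_{\theta=0}=\frac{r\big(H_d'(r)-G_d'(r)\big)}{H_d(r)+G_d(r)}=\frac{1-12r+11r^2-8r^3+2r^4}{1-6r+11r^2-8r^3+2r^4},
$$
and since a direct expansion gives $s_0(r)-s_\alpha(r)=\alpha\,(1-6r+11r^2-8r^3+2r^4)$, this quantity equals $\alpha$ exactly when $s_\alpha(r)=0$, i.e. at $r=r_s(\alpha)$. This establishes that $r_s(\alpha)$ is best possible and completes the proof.
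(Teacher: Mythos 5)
Your proposal is correct and follows essentially the same route as the paper's own proof: Jahangiri's criterion (Lemma~\ref{lemFS}) applied to $D_f^{\epsilon,r}$ with the bounds \eqref{BBHJ2}, Lemma~\ref{lemID}(a),(c) producing the same quartic $s_\alpha$, and the same extremal function $H_d+\overline{G_d}$ with $H_d(z)=2z-zH_2'(z)$, $G_d(z)=-zG_2'(z)$, followed by the identical Jacobian/Lewy and arg-derivative computations for sharpness. Two of your details are in fact improvements on the printed proof: your monotonicity argument (using $r(4r^2-12r+11)<4$ on $(0,1)$) correctly establishes $s'_\alpha<0$, whereas the paper's displayed factorization of $s'_\alpha$ contains a sign slip (the bracket should read $3(2-\alpha)-11(1-\alpha)r+12(1-\alpha)r^2-4(1-\alpha)r^3$, so its negativity is not immediate as claimed), and your explicit identity $s_0(r)-s_\alpha(r)=\alpha\left(1-6r+11r^2-8r^3+2r^4\right)$ makes transparent the paper's otherwise unexplained final step that the angular derivative equals $\alpha$ at $r=r_s(\alpha)$.
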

\begin{proof}
We follow  the notation and the method of  proof of Theorem~\ref{thmFS}. In order to
prove (1) of Theorem~\ref{thmFSR}, it suffices to show that
$D_f^{\epsilon,r}\in\mathcal{FS}^{*}_{H}(\alpha)$, where $D_f^{\epsilon,r}(z)$ is defined by~\eqref{eqDfr}.
Accordingly, we consider
\begin{equation*}
\begin{split}
S_2&=\sum_{n=2}^{\infty}\frac{n-\alpha}{1-\alpha}|n a_{n}|r^{n-1}+\sum_{n=2}^{\infty}\frac{n+\alpha}{1-\alpha}|n b_{n}|r^{n-1}\\
&\leq \sum_{n=2}^{\infty}\frac{n-\alpha}{1-\alpha}\left(\frac{n(n+1)}{2}\right)r^{n-1}
+\sum_{n=2}^{\infty}\frac{n+\alpha}{1-\alpha}\left(\frac{n(n-1)}{2}\right)r^{n-1}\\
&=\sum_{n=2}^{\infty}\frac{n \left(n^2-\alpha\right)}{1-\alpha }r^{n-1}\\
&=\frac{1}{1-\alpha}\left[\frac{r \left(8-5 r+4 r^2-r^3\right)}{(1-r)^4}-\frac{\alpha r(2-r) }{(1-r)^2}\right]=:T_2,
\end{split}
\end{equation*}
where we have used Lemma~\ref{lemID}(a) and (c) in the last equality.
According to Lemma~\ref{lemFS}, it is sufficient to show that $T_{2}\leq 1$, which is equivalent to $s_{\alpha}(r)\geq 0$, where $s_{\alpha}(r)$ is given by~\eqref{eqrSR}.
Note that $s_{\alpha}(0)=1-\alpha>0$ and $s_{\alpha}(1)=-6<0$ and so $s_{\alpha}(r)$ has at least one zero in the interval $(0,1)$. Since
\begin{equation*}
\begin{split}
s'_{\alpha}(r)&=-2\left[3(2-\alpha)+11(1-\alpha)r-12(1-\alpha)r^2+4(1-\alpha)r^3\right]\\
&=-2\left\{3(2-\alpha)+(1-\alpha)r\left[4\left(r-\frac{3}{2}\right)^2+2\right]\right\}<0,
\end{split}
\end{equation*}
$s_{\alpha}(r)$ is strictly monotone decreasing in $(0,1)$ and hence, $s_{\alpha}(r)$ has exactly one root in $(0,1)$ for all $\alpha\in[0,1)$. Thus, $D_f^{\epsilon,r}\in\mathcal{FS}^{0*}_{H}(\alpha)$ for $r\leq r_{s}(\alpha)$, where $r_{s}(\alpha)$ is the real root of $s_{\alpha}(r)=0$ in the interval $(0,1)$.

The second part (2) of Theorem~\ref{thmFSR} follows as before by setting $\alpha=0$.

To prove the sharpness, we consider function $DF(z)=H_{d}(z)+\overline{G_{d}(z)}$ with
$$H_{d}(z)=2z-z H'_{2}(z)\quad{\rm and}\quad G_{d}(z)=-zG'_{2}(z),
$$
where $H_{2}(z)$ and $G_{2}(z)$ are defined by~\eqref{eqRL}.
Direct computation yields
\begin{equation*}
H'_{d}(r)=2-\frac{1+2r}{(1-r)^4}\quad{\rm and}\quad G'_{d}(r)=\frac{r(2+r)}{(1-r)^4}.
\end{equation*}
Again, since $DF(r)$ has real coefficients, we obtain
\begin{equation*}
\begin{split}
J_{DF}(r)&=\left(H'_{d}(r)+G'_{d}(r)\right)\left(H'_{d}(r)-G'_{d}(r)\right)\\
&=\left(2-\frac{1+2r}{(1-r)^4}+\frac{r(2+r)}{(1-r)^4}\right)
\left(2-\frac{1+2r}{(1-r)^4}-\frac{r(2+r)}{(1-r)^4}\right)\\
&=\frac{\left(1-8 r+13 r^2-8 r^3+2 r^4\right)\left(1-12 r+11 r^2-8 r^3+2 r^4\right)}{(1-r)^4}.
\end{split}
\end{equation*}
Thus, $J_{F}(r)=0$ in $(0,1)$ if and only if
$r=r_{u}\approx 0.0903331$ or $r=r^{*}_{u}=0.164878$, where $r_{u}$ and $r^{*}_{u}$ are the roots of
$$1-12 r+11 r^2-8 r^3+2 r^4=0 ~\mbox{ and } 1-8 r+13 r^2-8 r^3+2 r^4=0
$$
in the interval $(0,1)$, respectively.
The graph of the function $J_{DF}(r)$ for $r\in(0,0.25)$ is shown in Figure~\ref{fR1}. Therefore, in view of Lewy's theorem, the function $DF(z)$ is not univalent in $|z|<1$ if $r>r_{u}$. This shows that $r_{u}$ is sharp. Furthermore,
\begin{equation*}
\begin{split}
\frac{\partial}{\partial\theta}\left(\arg\left(
DF(re^{i\theta})\right)\right)\bigg|_{\theta=0}&=\frac{rH'_{d}(r)-rG'_{d}(r)}{H_{d}(r)+G_{d}(r)}\\
&=\frac{1-12 r+11 r^2-8 r^3+2 r^4}{1-6 r+11 r^2-8 r^3+2 r^4}.
\end{split}
\end{equation*}
Thus,  by~\eqref{eqrSR} and $s_{\alpha}(r_{s}(\alpha))=0$ , we have
$$\frac{\partial}{\partial\theta}\left(\arg\left(
DF(re^{i\theta})\right)\right)\bigg|_{\theta=0,r=r_{s}(\alpha)}= \alpha.
$$
This shows that $r_{s}(\alpha)$ is the best possible.
\end{proof}

\begin{figure}[!h]
  \centering
        \includegraphics[height=4.0in,keepaspectratio]{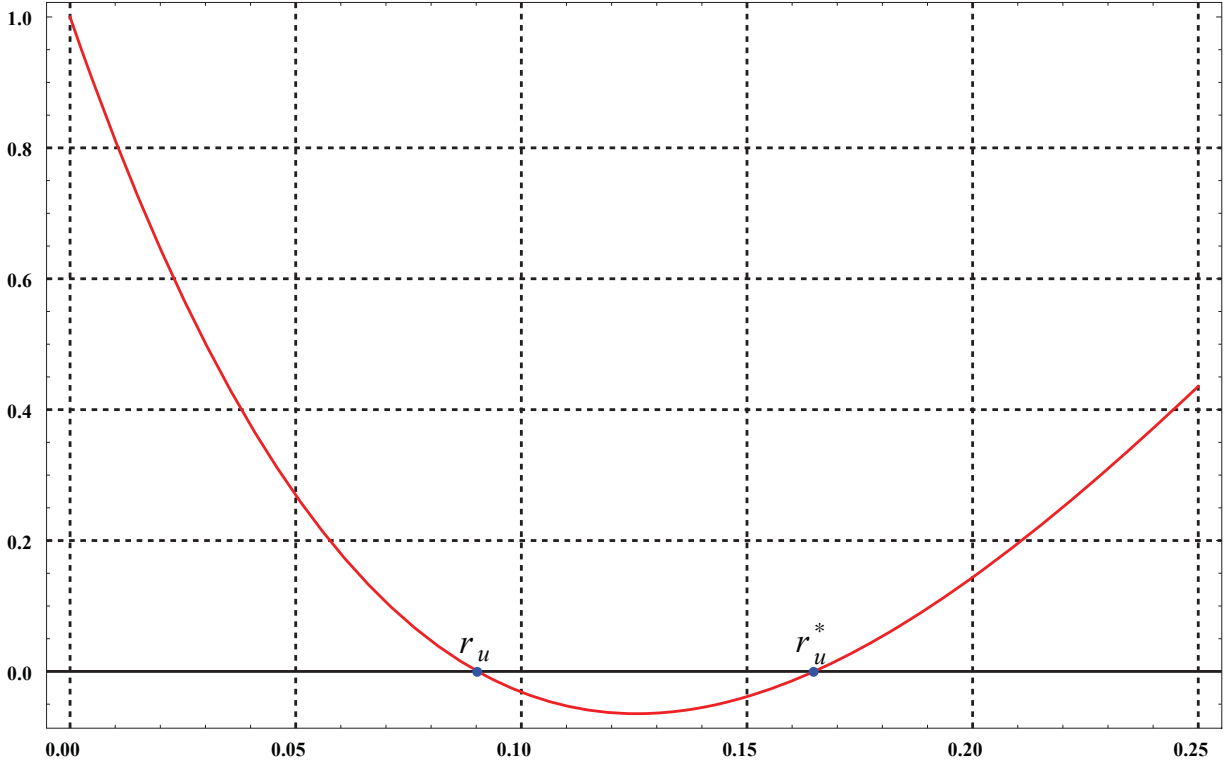}
       \caption{The roots of the Jacobian $J_{DF}(r)$ for $r\in(0,0.25)$.}\label{fR1}
\end{figure}

The following result can be obtained by a similar argument as in the proof of Theorem~\ref{thmFSR}, and so we omit the proof here.
\begin{theorem}\label{thmFCR}
Let $f=h+\overline{g}$, where $h$ and $g$ are given by~\eqref{phg}, and the coefficients satisfy the conditions~\eqref{BBHJ2}
for $n\geq 2$. Then $D_f^{\epsilon}=zf_{z}-\epsilon\, \overline{z}f_{\overline{z}}\,\,(|\epsilon|=1)$ is fully convex of order $\alpha$ in $|z|<r_{c}(\alpha)$, where $r_{c}(\alpha)$ is the unique root of the equation
$$
1-\alpha -(23-10 \alpha) r+(19-30 \alpha ) r^2-(41-42 \alpha) r^3+(30-31 \alpha ) r^4-12(1-\alpha) r^5+2(1-\alpha ) r^6= 0
$$
in the interval $(0,1)$. In particularly, $D_f^{\epsilon}$ is fully convex in $|z|<r_{c}\approx 0.0449935$, where $r_{c}$ is the unique root of the equation
$$
1-23 r+19 r^2-41 r^3+30 r^4-12 r^5+2 r^6=0
$$
in the interval $(0,1)$. All results are sharp.
\end{theorem}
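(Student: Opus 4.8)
The plan is to run the argument of Theorem~\ref{thmFSR} essentially verbatim, but with the full convexity criterion Lemma~\ref{lemFC} in place of the full starlikeness criterion Lemma~\ref{lemFS}. As in that proof, the hypotheses~\eqref{BBHJ2} force $b_1=g'(0)=0$ and guarantee that $h$ and $g$ are analytic in $\ID$, so $f=h+\overline{g}\in\mathcal{H}_0$; by Definition~\ref{DfF} it then suffices to show that the dilated operator $D_f^{\epsilon,r}$ of~\eqref{eqDfr} lies in $\mathcal{FC}^{0}_{H}(\alpha)$ for every $r\le r_c(\alpha)$.

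First I would feed the coefficients of $D_f^{\epsilon,r}$, whose analytic and co-analytic parts have moduli $n|a_n|r^{n-1}$ and $n|b_n|r^{n-1}$, into Lemma~\ref{lemFC}. This reduces the problem to the estimate
\[
\sum_{n=2}^{\infty}\frac{n^2(n-\alpha)}{1-\alpha}|a_{n}|r^{n-1}+\sum_{n=2}^{\infty}\frac{n^2(n+\alpha)}{1-\alpha}|b_{n}|r^{n-1}\le 1,
\]
the only change from Theorem~\ref{thmFSR} being the extra factor $n$ supplied by Lemma~\ref{lemFC}. Inserting the bounds~\eqref{BBHJ2}, combining the two series via the identity $(n-\alpha)(n+1)+(n+\alpha)(n-1)=2(n^2-\alpha)$, and evaluating the resulting single series by Lemma~\ref{lemID}(d) and (b) yields a rational function of $r$; clearing the powers of $(1-r)$ converts the inequality $\le 1$ into the assertion that the polynomial $t_\alpha(r)$ displayed in the statement is nonnegative on $[0,r_c(\alpha)]$.

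Next I would establish that $t_\alpha(r)$ has exactly one root in $(0,1)$ for each $\alpha\in[0,1)$. The endpoint values ($t_\alpha(0)=1-\alpha>0$ and $t_\alpha(1)<0$) give at least one root, and uniqueness is where the real work lies: this is the step I expect to be the main obstacle, since---unlike the clean $s'_\alpha(r)<0$ of Theorem~\ref{thmFSR}---here several low-order coefficients change sign with $\alpha$. I would follow the device of Theorem~\ref{thmFS}: differentiate, complete the square on the top-degree terms to show they are strictly negative for $0<r<1$, dominate $t'_\alpha(r)$ by an explicit quadratic $q_\alpha(r)$, and then split into the two $\alpha$-ranges cut out by the critical point of $q_\alpha$ to conclude $t'_\alpha(r)<0$ throughout $(0,1)$. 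The same monotonicity shows that $r_c(\alpha)$ decreases in $\alpha$, so specializing to $\alpha=0$ recovers the stated univalent radius $r_c\approx 0.0449935$.

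Finally, for sharpness I would take the extremal to be $DF=H_d+\overline{G_d}$ generated, exactly as in Theorem~\ref{thmFSR}, from the right half-plane mapping $L=H_2+\overline{G_2}$ of~\eqref{eqRL} via $H_d(z)=2z-zH_2'(z)$ and $G_d(z)=-zG_2'(z)$, which saturates~\eqref{BBHJ2}. Since $DF$ has real coefficients, $J_{DF}(r)=\big(H_d'(r)+G_d'(r)\big)\big(H_d'(r)-G_d'(r)\big)$ factors into two explicit polynomials over a power of $(1-r)$, and the factor carrying the first sign change in $(0,1)$ locates the univalence radius through Lewy's theorem. For the order-$\alpha$ assertion I would compute the nested derivative $\frac{\partial}{\partial\theta}\big(\arg\frac{\partial}{\partial\theta}DF(re^{i\theta})\big)\big|_{\theta=0}$ prescribed by Definition~\ref{dfFC}; I expect it to come out as a ratio of two polynomials whose numerator reproduces $t_\alpha(r)$ up to an additive $\alpha$-multiple of the denominator, exactly as the numerator $s_\alpha(r)$ arose in Theorem~\ref{thmFSR}. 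Then the defining relation $t_\alpha(r_c(\alpha))=0$ forces this quantity to equal $\alpha$ at $r=r_c(\alpha)$, which shows that $r_c(\alpha)$ is best possible.
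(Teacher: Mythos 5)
Your plan follows the intended template (Lemma~\ref{lemFC} in place of Lemma~\ref{lemFS}, applied to $D_f^{\epsilon,r}$; the paper itself omits the proof and points to Theorem~\ref{thmFSR}), but the key computational claim in your second paragraph is false, and the proof collapses there. Carrying out your own reduction: with $|a_n|\le (n+1)/2$, $|b_n|\le (n-1)/2$ and the identity $(n-\alpha)(n+1)+(n+\alpha)(n-1)=2(n^2-\alpha)$, the criterion of Lemma~\ref{lemFC} becomes
\[
\sum_{n=2}^{\infty}\frac{n^{2}(n^{2}-\alpha)}{1-\alpha}\,r^{n-1}
=\frac{1}{1-\alpha}\left[\frac{r\left(16+r+11r^{2}-5r^{3}+r^{4}\right)}{(1-r)^{5}}
-\alpha\,\frac{r\left(4-3r+r^{2}\right)}{(1-r)^{3}}\right]\le 1
\]
by Lemma~\ref{lemID}(d) and (b). Only the powers $(1-r)^{5}$ and $(1-r)^{3}$ occur, so clearing denominators yields the \emph{quintic} inequality
\[
t_{\alpha}(r):=1-\alpha-(21-9\alpha)r+(9-21\alpha)r^{2}-21(1-\alpha)r^{3}+10(1-\alpha)r^{4}-2(1-\alpha)r^{5}\ge 0,
\]
not the sextic displayed in the statement. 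These are genuinely different polynomials: the displayed sextic equals $-24$ at $r=1$ for every $\alpha$, so it is not $(1-r)t_{\alpha}(r)$ in disguise, and its root at $\alpha=0$ is $\approx 0.0450$ while $t_{0}$ has its root at $\approx 0.0485$. Hence your step ``clearing the powers of $(1-r)$ converts the inequality $\le 1$ into nonnegativity of the polynomial displayed in the statement'' cannot be carried out. (The displayed sextic is exactly what one obtains by evaluating $\sum_{n\ge 2}n^{4}r^{n-1}$ as $\frac{1+11r+11r^{2}+r^{3}}{(1-r)^{6}}-1$, i.e.\ with the wrong power of $1-r$; this contradicts Lemma~\ref{lemID}(d) itself, so the theorem as printed appears to contain an error, and a correct execution of your plan proves a different --- in fact stronger, since $0.0485>0.0450$ --- statement.)

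The sharpness argument has a second, independent gap: the extremal of Theorem~\ref{thmFSR} cannot be reused verbatim. For $DF=H_{d}+\overline{G_{d}}$ with $H_{d}=2z-zH_{2}'$ and $G_{d}=-zG_{2}'$, the analytic coefficients are $A_{n}=-n(n+1)/2$ but the co-analytic ones are $B_{n}=+n(n-1)/2$; writing the convexity quantity of Definition~\ref{dfFC} at $\theta=0$ for a real-coefficient mapping as
\[
\frac{\partial}{\partial\theta}\left(\arg\left(\frac{\partial}{\partial\theta}DF(re^{i\theta})\right)\right)\bigg|_{\theta=0}
=\frac{1+\sum_{n\ge2}n^{2}(A_{n}+B_{n})r^{n-1}}{1+\sum_{n\ge2}n(A_{n}-B_{n})r^{n-1}},
\]
one finds $A_{n}+B_{n}=-n$ and $A_{n}-B_{n}=-n^{2}$, so numerator and denominator are both $1-\sum_{n\ge2}n^{3}r^{n-1}$ and the quantity is identically $1$: this function never exhibits order of convexity $\alpha<1$ along $\theta=0$ and cannot witness sharpness. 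You must flip the sign of the co-analytic part (e.g.\ take $h=2z-H_{2}$, $g=-G_{2}$, $\epsilon=1$), making both coefficient sequences of $D_{f}^{\epsilon}$ negative; then $A_{n}+B_{n}=-n^{2}$, $A_{n}-B_{n}=-n$, and the quantity becomes
\[
\frac{1-21r+9r^{2}-21r^{3}+10r^{4}-2r^{5}}{1-9r+21r^{2}-21r^{3}+10r^{4}-2r^{5}},
\]
which equals $\alpha$ precisely when $t_{\alpha}(r)=0$ --- again the quintic. So both halves of a corrected argument identify the sharp radius as the root of $t_{\alpha}$, not of the polynomial in the statement; as written, your proposal neither derives the stated polynomial nor produces a valid extremal for it.
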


\section{Radius of the harmonic linear differential operator $(1-\lambda)f+\lambda D_f^{\epsilon}$}\label{Sect3}

For $\lambda\geq 0$, let us introduce
$$\mathcal{K}_{H}^{1}(\lambda)=\left\{f=h+\overline{g}\in \mathcal{H}: \RE \left(h'_{\lambda}(z)\right)>|g'_{\lambda}(z)|,~ z\in\ID\right\},
$$
where $h_{\lambda}(z)=(h*\phi_{\lambda})(z)$ and $g_{\lambda}(z)=(g*\phi_{\lambda})(z),$ with
$$\phi_{\lambda}(z)=(1-\lambda)\frac{z}{1-z}+\lambda \frac{z}{(1-z)^2}=\sum_{n=1}^{\infty}\left(1-\lambda+n\lambda\right)z^n.$$
Similarly, we define
$$\mathcal{K}_{H}^2(\lambda)=\left\{f=h+\overline{g}\in \mathcal{H}: | h'_{\lambda}(z)-1|<1-|g'_{\lambda}(z)|, ~z\in\ID\right\}.
$$
Also, we note that $\mathcal{K}_{H}^{1}:=\mathcal{K}_{H}^{1}(0)$ and $\mathcal{K}_{H}^{2}:=\mathcal{K}_{H}^{2}(0)$.
Next, we observe that if $f\in \mathcal{K}_{H}^1(\lambda)$ and $F_{\lambda}(z)=h_{\lambda}(z)+\epsilon g_{\lambda}(z)$, then
$$\RE \left(F'_{\lambda}(z)\right)
=\RE\left(h'_{\lambda}(z) + \epsilon g'_{\lambda}(z)\right)\geq  \RE\left(h'_{\lambda}(z)\right)-|g'_{\lambda}(z)|>0
$$
for each $|\epsilon| \leq 1$. Moreover, because $F_{\lambda}(z)=((h+\epsilon g)*\phi_{\lambda})(z)$, we see that
\begin{equation*}
\begin{split}
F'_{\lambda}(z)&=\left(h'(z)+\epsilon g'(z)\right)*\left(\frac{\phi_{\lambda}(z)}{z}\right)\\
&=\left(h'(z)+\epsilon g'(z)\right)*\left(\sum_{n=1}^{\infty}\left(1-\lambda+\lambda n\right)z^{n-1}\right)
\end{split}
\end{equation*}
so that
$$F'_{\lambda}(z)*\frac{\Psi_{\lambda}(z)}{z}=h'(z)+\epsilon g'(z), \quad \Psi_{\lambda}(z) =\sum_{n=1}^{\infty}
\frac{z^{n}}{1-\lambda+\lambda n}.
$$
By the convolution result, since $\Psi_{\lambda}$ is convex in $\ID$ and thus, ${\rm Re}\,(\Psi_{\lambda}(z)/z)>1/2$ in $\ID$,
it follows that ${\rm Re}\,(h'(z)+\epsilon g'(z))>0$ in $\ID$ and therefore,
$h+\epsilon g$ is close-to-convex for each $|\epsilon|\leq 1$. Consequently, by Theorem~\ref{thmClinie}, the harmonic function $f=h+\overline{g}$ is close-to-convex in $\ID$ whenever $f\in \mathcal{K}_{H}^1(\lambda)$.
Thus, functions in $\mathcal{K}_{H}^1(\lambda)$ are close-to-convex in $\ID$.

The following result follows immediately from~\cite{Kalaj2014}, and we omit its proof.
\begin{theorem}\label{thmK2}
Let $f=h+\overline{g}$, where $h$ and $g$ have the form~\eqref{phg} with $J_f(0)=1-|b_1|^2>0$. If
\begin{equation}\label{eqC1}
\sum_{n=2}^{\infty}(1-\lambda+\lambda n) n \left(|a_{n}|+|b_{n}|\right)\leq 1-|b_1|
\quad(\lambda\geq 0).
\end{equation}
holds, then $f\in \mathcal{K}_H^2(\lambda)$.
\end{theorem}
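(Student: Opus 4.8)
The plan is to verify the defining inequality of $\mathcal{K}_H^2(\lambda)$ head-on, namely $|h_\lambda'(z)-1|<1-|g_\lambda'(z)|$ for all $z\in\ID$, by reducing everything to the hypothesized coefficient bound \eqref{eqC1}. First I would record how the Hadamard product with $\phi_{\lambda}$ acts on Taylor coefficients. Since $\phi_{\lambda}(z)=\sum_{n=1}^{\infty}(1-\lambda+n\lambda)z^{n}$, convolving $h$ and $g$ with $\phi_\lambda$ simply multiplies the $n$-th coefficient by the factor $(1-\lambda+n\lambda)$. Using that this factor equals $1$ when $n=1$, we get $h_{\lambda}(z)=z+\sum_{n=2}^{\infty}(1-\lambda+n\lambda)a_{n}z^{n}$ and $g_{\lambda}(z)=\sum_{n=1}^{\infty}(1-\lambda+n\lambda)b_{n}z^{n}$, so the normalization of $h_\lambda$ is preserved and $g_\lambda$ carries a constant term $b_1$ in its derivative.

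Differentiating gives
$$h_{\lambda}'(z)-1=\sum_{n=2}^{\infty}n(1-\lambda+n\lambda)a_{n}z^{n-1},\qquad g_{\lambda}'(z)=b_{1}+\sum_{n=2}^{\infty}n(1-\lambda+n\lambda)b_{n}z^{n-1}.$$
The central step is then the triangle inequality combined with $|z|^{n-1}\le|z|<1$ for $n\ge2$ and $z\in\ID$, which yields
$$|h_{\lambda}'(z)-1|+|g_{\lambda}'(z)|\le |b_{1}|+\sum_{n=2}^{\infty}n(1-\lambda+n\lambda)(|a_{n}|+|b_{n}|)\,|z|^{n-1}.$$
Bounding $|z|^{n-1}\le1$ and invoking \eqref{eqC1} shows the right-hand side is at most $|b_{1}|+\sum_{n=2}^{\infty}n(1-\lambda+n\lambda)(|a_{n}|+|b_{n}|)\le 1$. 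Rearranging the resulting inequality $|h_{\lambda}'(z)-1|+|g_{\lambda}'(z)|\le1$ gives exactly the membership condition for $\mathcal{K}_H^2(\lambda)$.

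The only point requiring care is upgrading $\le$ to the strict inequality demanded by the definition, and here is where the hypothesis $J_f(0)=1-|b_1|^2>0$ enters, since it forces $|b_1|<1$. At $z=0$ the left-hand side equals $|b_1|<1$; for $z\neq 0$ the factor $|z|^{n-1}<1$ makes the middle estimate strict whenever some coefficient with $n\ge2$ is nonzero, and in the degenerate case where all such coefficients vanish the left-hand side again reduces to $|b_1|<1$. Thus $|h_{\lambda}'(z)-1|<1-|g_{\lambda}'(z)|$ holds throughout $\ID$, i.e. $f\in\mathcal{K}_H^2(\lambda)$. I do not anticipate any genuine obstacle: the whole argument is a coefficient estimate, and the sole bookkeeping subtlety is isolating the $n=1$ contributions (the constant $1$ in $h_\lambda'$ and the term $b_1$ in $g_\lambda'$). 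This is precisely the $\lambda$-parametrized analogue of the corresponding estimate of Kalaj et al.~\cite{Kalaj2014}, which is why the result follows immediately from their work.
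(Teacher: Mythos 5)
Your proof is correct, and it is essentially the argument the paper has in mind: the paper omits the proof of Theorem~\ref{thmK2} entirely, deferring to Kalaj et al.~\cite{Kalaj2014}, whose lemma (the $\lambda=0$ case) rests on exactly the triangle-inequality coefficient estimate you carry out, here applied to the convolved functions $h_\lambda$, $g_\lambda$ whose $n$-th coefficients pick up the factor $1-\lambda+n\lambda$. Your treatment of the strict inequality --- splitting into $z=0$, the degenerate case $a_n=b_n=0$ for all $n\ge 2$, and the case where some higher coefficient survives so that $|z|^{n-1}<1$ bites (valid since $n(1-\lambda+n\lambda)>0$ for $n\ge 2$, $\lambda\ge 0$) --- is a point the citation glosses over, and you handle it correctly using $|b_1|<1$ from $J_f(0)>0$.
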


\begin{theorem}\label{thmSt}
Let $h$ and $g$ have the form~\eqref{phg} and the coefficients of the series satisfy the conditions~\eqref{BBHJ}.
Then, $f=h+\overline{g}$ satisfies the inequality
\begin{equation*}
|h'_{\lambda}(z)-1|<1-|g'_{\lambda}(z)|\quad(\lambda\geq 0)
\end{equation*}
in the disk $|z|<r_{s}$ and is fully starlike in $|z|<r_{s}$, where $r_{s}$ is the root of the equation
\begin{equation}\label{eqDOSR}
1-(11+6 \lambda ) r+(21-8 \lambda ) r^2-(19+2 \lambda ) r^3+10 r^4-2 r^5=0
\end{equation}
in the interval $(0,1)$. The result is sharp.
\end{theorem}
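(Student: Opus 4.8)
The plan is to reduce both assertions to a single majorant for a coefficient sum and then to identify the cut-off radius with the polynomial appearing in~\eqref{eqDOSR}. Writing $h_\lambda=h*\phi_\lambda$ and $g_\lambda=g*\phi_\lambda$, we have $h_\lambda(z)=z+\sum_{n\ge2}(1-\lambda+\lambda n)a_nz^n$ and $g_\lambda(z)=\sum_{n\ge2}(1-\lambda+\lambda n)b_nz^n$, since $b_1=0$ under~\eqref{BBHJ}. For $|z|=r$ I would first estimate
\[
|h'_\lambda(z)-1|+|g'_\lambda(z)|\le\sum_{n\ge2}n(1-\lambda+\lambda n)\big(|a_n|+|b_n|\big)r^{n-1},
\]
and then feed in the bound $|a_n|+|b_n|\le\tfrac13(2n^2+1)$ coming from the first two inequalities in~\eqref{BBHJ}. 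This majorises the right-hand side by
\[
\Sigma(r):=\tfrac13\sum_{n\ge2}n(1-\lambda+\lambda n)(2n^2+1)r^{n-1}.
\]

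Next I would put $\Sigma$ in closed form. Splitting $n(1-\lambda+\lambda n)(2n^2+1)=(1-\lambda)(2n^3+n)+\lambda(2n^4+n^2)$ and summing term by term with Lemma~\ref{lemID}(a)--(d), I expect $\Sigma(r)=N(r)/(1-r)^5$ for an explicit polynomial $N$, and a direct expansion should give
\[
(1-r)^5-N(r)=1-(11+6\lambda)r+(21-8\lambda)r^2-(19+2\lambda)r^3+10r^4-2r^5,
\]
the left side of~\eqref{eqDOSR}. Thus $\Sigma(r)\le1$ holds exactly when that polynomial is nonnegative. For the uniqueness of the root I would \emph{not} analyse the quintic directly: because $\Sigma$ is a power series with strictly positive coefficients, it increases strictly from $\Sigma(0)=0$ to $+\infty$ on $(0,1)$, so $\Sigma(r)=1$ has a single solution $r_s\in(0,1)$, and $\Sigma(r)\le1\iff r\le r_s$, with strict inequality for $r<r_s$.

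With the estimate $\Sigma(r)\le1$ available for $r\le r_s$, both conclusions follow. Applied to the dilation $f_r$, this estimate is precisely the hypothesis of Theorem~\ref{thmK2} (with $|b_1|=0$), which yields $|h'_\lambda(z)-1|<1-|g'_\lambda(z)|$ throughout $|z|<r_s$; strictness is guaranteed since $\Sigma$ is strictly increasing. The very same estimate is the case $\alpha=0$ of the Jahangiri condition of Lemma~\ref{lemFS} for the dilated operator $(1-\lambda)f_r+\lambda D_{f_r}^{\epsilon}$, whose analytic and co-analytic coefficients are $(1-\lambda+\lambda n)a_nr^{n-1}$ and $(1-\lambda+\lambda n)b_nr^{n-1}$; hence $(1-\lambda)f+\lambda D_f^{\epsilon}$ is fully starlike on $|z|<r_s$. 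Equivalently, one may note that the $\lambda$-weighted bound dominates the unweighted one and invoke the result of Kalaj et al.~\cite{Kalaj2014}.

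The main work, and the step I expect to be most delicate, is sharpness. Following the construction used for Theorem~\ref{thmFS}, I would take the extremal mapping built from the harmonic Koebe function~\eqref{eqHC} by setting $H=2z-H_1$ and $G=-G_1$, and form $(1-\lambda)(H+\overline{G})+\lambda D_{H+\overline{G}}^{\epsilon}$. Since this function has real coefficients, I can evaluate
\[
\frac{\partial}{\partial\theta}\arg\big((1-\lambda)(H+\overline{G})+\lambda D_{H+\overline{G}}^{\epsilon}\big)(re^{i\theta})\Big|_{\theta=0}
\]
as a quotient whose numerator equals, up to a positive factor, the left side of~\eqref{eqDOSR}; it therefore vanishes exactly at $r=r_s$, so the image of $|z|=r$ stops being starlike once $r>r_s$. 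Computing the Jacobian of the same extremal function and checking that it changes sign at $r_s$ then shows, via Lewy's theorem, that univalence and the defining inequality also fail beyond $r_s$, so $r_s$ cannot be increased. The delicate points are to fix the signs in the extremal function so that the bounds in~\eqref{BBHJ} are simultaneously tight, and to verify that the numerator is exactly the polynomial in~\eqref{eqDOSR}.
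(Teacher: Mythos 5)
Your proposal is correct and follows essentially the same route as the paper's proof: the same majorization $|a_n|+|b_n|\le\tfrac13(2n^2+1)$, the same closed-form summation via Lemma~\ref{lemID} (your claimed identity $(1-r)^5-N(r)$ does reduce exactly to the polynomial in \eqref{eqDOSR}), the same appeal to Theorem~\ref{thmK2} for the inequality and to the Kalaj/Jahangiri coefficient condition for full starlikeness of $f_r$, and sharpness via the extremal built from $2z-H_1$ and $-G_1$ exactly as in Theorem~\ref{thmFS}, which is precisely what the paper invokes when it omits that step. Your one refinement --- deducing uniqueness of the root from strict monotonicity of $\Sigma$ (a power series with positive coefficients) rather than analysing the quintic --- is a clean simplification, but it does not change the substance of the argument.
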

\begin{proof}
By assumption~\eqref{BBHJ}, $f = h + \overline{g}$ is harmonic in $\ID$. Let $0 < r < 1$. It suffices to show that
the coefficients of $f_r(z)=r^{-1}f(rz)$ satisfy the inequality~\eqref{eqC1}, where
\begin{equation}\label{eqDFr0}
f_{r}(z)=\frac{f(rz)}{r}=z+\sum_{n=2}^{\infty} a_{n}r^{n-1}z^{n}
+\overline{\sum_{n=2}^{\infty} b_{n}r^{n-1}z^{n}}.
\end{equation}
By hypotheses
$$n\left(|a_n|+|b_n|\right)\leq\frac{n(2n^2+1)}{3}$$
and thus, we compute
\begin{equation*}
\begin{split}
S_{3}&=\sum_{n=2}^{\infty}(1-\lambda+\lambda n) n (|a_{n}|+|b_{n}|)r^{n-1}\\
&\leq \sum_{n=2}^{\infty}(1-\lambda+\lambda n)\frac{n(2n^2+1)}{3}r^{n-1}\\
&=\frac{1}{3}\sum_{n=2}^{\infty}\left[2\lambda n^4 +2(1-\lambda)n^3 +\lambda n^2 +(1-\lambda)n\right]r^{n-1}=:T_3,
\end{split}
\end{equation*}
which is less than or equal to 1 if $T_3\leq 1$, which is equivalent to
$$1-(11+6 \lambda ) r+(21-8 \lambda ) r^2-(19+2 \lambda ) r^3+10 r^4-2 r^5\geq 0.$$
Thus, from Theorem~\ref{thmK2}, $f_{r}(z)=r^{-1}f(rz)$ is close-to-convex (univalent) and fully starlike in $\ID$ for all $0<r\leq r_{s}$, where $r_{s}$ is the root of the equation~\eqref{eqDOSR} in the interval $(0,1)$. In particular, $f$ is close-to-convex (univalent) and fully starlike in $|z|<r_{s}$.

The proof of sharpness is similar to Theorem~\ref{thmFS} and so we omit it here.
\end{proof}

\begin{remark}
If $\lambda=0$ in Theorem~\ref{thmSt}, the equation~\eqref{eqDOSR} reduces to
$$(1-r)\left(1-10r+11r^2-8r^3+2r^4\right)=0.$$
Thus, $f_{r}(z)\in \mathcal{FS}^{*}_{H}\cap \mathcal{K}_{H}^2$ for $r\leq r_{S}\approx 0.112903$, where $r_{S}$ is the unique root of the equation
$$1-10r+11r^2-8r^3+2r^4=0.$$
This gives the result obtained in \cite[Lemma 1.2]{Kalaj2014}.

If $\lambda=1$ in Theorem~\ref{thmSt}, then the equation~\eqref{eqDOSR} reduces to the equation~\eqref{eqrS0} in Theorem~\ref{thmFS}. Thus, the univalent radius is $r_u\approx 0.0614313$, where $r_u$ is the root of the equation~\eqref{eqrS0}.
\end{remark}

\begin{corollary}\label{clS}
Let $f=h+\overline{g}$, where $h$ and $g$ are given by~\eqref{phg}, and the coefficients satisfy the conditions~\eqref{BBHJ}
for $n\geq 2$. Then the radius of fully starlikeness for $F(z)=(1-\lambda)f+\lambda D_f^{\epsilon}\,\,(0\leq\lambda\leq 1)$ is at least $r_{s}$, where $r_{s}$ is the root of the equation~\eqref{eqDOSR} in the interval $(0,1)$.
\end{corollary}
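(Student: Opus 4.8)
The plan is to reduce the statement to the coefficient estimate already carried out in the proof of Theorem~\ref{thmSt}, the only new feature being the unimodular parameter $\epsilon$ sitting in the co-analytic part.

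First I would write $F=(1-\lambda)f+\lambda D_f^{\epsilon}$ in its harmonic decomposition. Since $D_f^{\epsilon}=zh'(z)-\epsilon\,\overline{zg'(z)}$, a direct computation gives $F=H_F+\overline{G_F}$ with
\begin{equation*}
H_F(z)=z+\sum_{n=2}^{\infty}(1-\lambda+\lambda n)\,a_n z^n,\qquad
G_F(z)=\sum_{n=2}^{\infty}\big[(1-\lambda)-\lambda\overline{\epsilon}\,n\big]\,b_n z^n,
\end{equation*}
where I use $b_1=0$ (forced by~\eqref{BBHJ}). Passing to the rescaled map $F_r(z)=r^{-1}F(rz)$, the analytic and co-analytic coefficients of $z^n$ become $(1-\lambda+\lambda n)a_n r^{n-1}$ and $[(1-\lambda)-\lambda\overline{\epsilon}\,n]b_n r^{n-1}$, respectively.

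The key observation is that $|\epsilon|=1$ lets me control the co-analytic coefficients by the triangle inequality: since $0\le\lambda\le1$,
$$\big|(1-\lambda)-\lambda\overline{\epsilon}\,n\big|\le (1-\lambda)+\lambda n=1-\lambda+\lambda n,$$
which is exactly the (nonnegative) weight appearing on the analytic side. Hence, applying the fully starlike criterion of Lemma~\ref{lemFS} with $\alpha=0$ to $F_r$, the relevant sum is dominated by
$$\sum_{n=2}^{\infty} n(1-\lambda+\lambda n)\big(|a_n|+|b_n|\big) r^{n-1}=S_3,$$
which is precisely the quantity estimated in the proof of Theorem~\ref{thmSt}.

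Finally I would invoke Theorem~\ref{thmSt}: under~\eqref{BBHJ} one has $S_3\le 1$ exactly when the left-hand side of~\eqref{eqDOSR} is nonnegative, i.e.\ for all $r\le r_s$. Therefore $F_r$ satisfies the hypothesis of Lemma~\ref{lemFS} and lies in $\mathcal{FS}_H^{*}$ for every $r\le r_s$, which is the assertion that the radius of fully starlikeness of $F$ is at least $r_s$. The point worth flagging is why the conclusion is only a lower bound rather than the exact radius: the single inequality $|(1-\lambda)-\lambda\overline{\epsilon}\,n|\le 1-\lambda+\lambda n$ is generally strict for arbitrary unimodular $\epsilon$ and turns into equality only in the aligned case $\epsilon=-1$. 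So I do not expect a genuine obstacle here; the content is entirely in matching the dominating sum to $S_3$, and the triangle-inequality step is what both makes the argument uniform in $\epsilon$ and accounts for the loss of sharpness.
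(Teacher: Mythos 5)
Your proposal is correct and follows essentially the same route as the paper: decompose $F_r$ into its analytic and co-analytic coefficients, use the triangle inequality $\bigl|(1-\lambda)-\lambda\overline{\epsilon}\,n\bigr|\leq 1-\lambda+\lambda n$ to eliminate $\epsilon$, and observe that the resulting dominating sum is exactly the quantity $S_3$ bounded by $1$ for $r\leq r_s$ in the proof of Theorem~\ref{thmSt}, whence Lemma~\ref{lemFS} (equivalently, the route through Theorem~\ref{thmK2}) gives full starlikeness. The only discrepancy is a sign: the paper records $B_n=[(1-\lambda)+\overline{\epsilon}\,\lambda n]b_n r^{n-1}$ while your $[(1-\lambda)-\lambda\overline{\epsilon}\,n]b_n r^{n-1}$ is the correct computation, but since $|\overline{\epsilon}|=1$ this is immaterial to the estimate.
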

\begin{proof}
If we write
$$F(z)=\frac{F(rz)}{r}=z+\sum_{n=2}^{\infty}A_n z^n+\overline{\sum_{n=2}^{\infty}B_n z^n},
$$
then
$$A_n=\left[(1-\lambda)+\lambda n\right] a_{n}r^{n-1}\quad{\rm and}\quad
B_n=\left[(1-\lambda)+\overline{\epsilon}\,\lambda n\right] b_{n}r^{n-1}~(\lambda>0)
$$
so that
$$n\left(|A_n|+|B_n|\right)\leq (1-\lambda+\lambda n)\left(|a_n|+|b_n|\right)n r^{n-1}
\leq (1-\lambda+\lambda n)\frac{n(2n^2+1)}{3} r^{n-1}.$$

Thus, we desired result follows from the proof of Theorem~\ref{thmSt}.
\end{proof}

\begin{theorem}\label{thmCt}
Let $h$ and $g$ have the form~\eqref{phg} and the coefficients of the series satisfy the conditions~\eqref{BBHJ2}.
Then, $f=h+\overline{g}\in K_{H}^2(\lambda)~(\lambda\geq 0)$
in the disk $|z|<r_{c}$ and is fully starlike in $|z|<r_{c}$, where $r_{c}$ is the root of the equation
\begin{equation}\label{eqDOSR2}
1-4(2+\lambda) r+(13-2 \lambda ) r^2-8 r^3+2 r^4=0
\end{equation}
in the interval $(0,1)$. The result is sharp.
\end{theorem}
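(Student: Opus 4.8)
The plan is to run the proof of Theorem~\ref{thmSt} essentially verbatim, the only change being that the Bieberbach-type bounds \eqref{BBHJ} are replaced by the convex-type necessary bounds \eqref{BBHJ2}. First I would note that \eqref{BBHJ2} makes the series \eqref{phg} converge in $\ID$, so $f=h+\overline{g}\in\mathcal{H}$ with $b_1=0$; then, fixing $0<r<1$, I would pass to the rescaled map $f_r(z)=r^{-1}f(rz)$ whose analytic and co-analytic Taylor coefficients are $a_nr^{n-1}$ and $b_nr^{n-1}$, exactly as in \eqref{eqDFr0}. The single elementary input that differs from Theorem~\ref{thmSt} is that \eqref{BBHJ2} gives the clean bound
$$|a_{n}|+|b_{n}|\le \frac{n+1}{2}+\frac{n-1}{2}=n,\qquad\text{so}\qquad n(|a_{n}|+|b_{n}|)\le n^{2}.$$

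Next I would estimate the sum appearing in Theorem~\ref{thmK2} applied to $f_r$:
\begin{equation*}
\begin{split}
S&=\sum_{n=2}^{\infty}(1-\lambda+\lambda n)\,n\,(|a_{n}|+|b_{n}|)\,r^{n-1}
\le \sum_{n=2}^{\infty}(1-\lambda+\lambda n)\,n^{2}\,r^{n-1}\\
&=(1-\lambda)\sum_{n=2}^{\infty}n^{2}r^{n-1}+\lambda\sum_{n=2}^{\infty}n^{3}r^{n-1}\\
&=(1-\lambda)\frac{r(4-3r+r^{2})}{(1-r)^{3}}+\lambda\frac{r(8-5r+4r^{2}-r^{3})}{(1-r)^{4}}=:T,
\end{split}
\end{equation*}
using Lemma~\ref{lemID}(b) and (c). Clearing the positive factor $(1-r)^{4}$, the requirement $S\le 1$ is a routine polynomial expansion that matches coefficient-by-coefficient the inequality \eqref{eqDOSR2}$\,\ge 0$ (constant term $1$, linear term $-4(2+\lambda)r$, quadratic $(13-2\lambda)r^{2}$, and so on). Granting $S\le 1$, Theorem~\ref{thmK2} places $f_r$ in $\mathcal{K}_H^2(\lambda)$, hence $f\in\mathcal{K}_H^2(\lambda)$ in $|z|<r$; and since $1-\lambda+\lambda n=1+\lambda(n-1)\ge 1$ for $n\ge 2$, we also get $\sum_{n\ge2}n|a_n|r^{n-1}+\sum_{n\ge2}n|b_n|r^{n-1}\le S\le 1$, so Lemma~\ref{lemFS} with $\alpha=0$ yields full starlikeness.

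The one point that deserves care, and which I regard as the main (mild) obstacle, is the uniqueness of $r_c$. The polynomial in \eqref{eqDOSR2} is \emph{not} monotone on $(0,1)$ in general — its derivative at $r=1$ equals $2-8\lambda$, which is positive for small $\lambda$ — so the decreasing-derivative trick used in Theorem~\ref{thmFSR} is unavailable. Instead I would argue through the majorant: both $\sum_{n\ge2}n^{2}r^{n-1}$ and $\sum_{n\ge2}n^{3}r^{n-1}$ are power series with positive coefficients, so $T$ is strictly increasing on $(0,1)$ with $T(0^{+})=0$ and $T(r)\to\infty$ as $r\to1^{-}$. Hence $T=1$ has exactly one solution $r_c\in(0,1)$, and $T\le 1$ (equivalently \eqref{eqDOSR2}$\,\ge0$) precisely for $r\le r_c$, which identifies $r_c$ as the unique root. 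As consistency checks I would note $\lambda=1$ returns \eqref{eqrSR0} of Theorem~\ref{thmFSR}, while $\lambda=0$ gives $1-8r+13r^{2}-8r^{3}+2r^{4}=(1-r)(1-7r+6r^{2}-2r^{3})=0$, whose relevant root is that of the cubic factor.

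Finally, for sharpness I would take the extremal to be the harmonic right half-plane mapping $L=H_2+\overline{G_2}$ of \eqref{eqRL}, whose coefficients $a_n=(n+1)/2$ and $b_n=-(n-1)/2$ attain equality in \eqref{BBHJ2}; for this choice every inequality above becomes an equality, so the associated boundary quantity (the Jacobian of the corresponding operator, or the derivative $\partial_\theta\arg(\,\cdot\,)\big|_{\theta=0}$ as computed in Theorems~\ref{thmFS} and~\ref{thmFSR}) attains its critical value exactly at $r=r_c$, showing the radius cannot be enlarged. Since this sharpness computation is entirely parallel to that of Theorem~\ref{thmFSR}, I would carry it out in the same manner (and, following the convention of Theorem~\ref{thmSt}, could simply remark that it is analogous and omit the details).
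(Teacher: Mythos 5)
Your main argument is correct and is essentially the paper's own proof: the bound $n(|a_n|+|b_n|)\le n^2$ from \eqref{BBHJ2}, the estimate of $\sum_{n\ge2}(1-\lambda+\lambda n)\,n(|a_n|+|b_n|)r^{n-1}$ via Lemma~\ref{lemID}(b),(c), the equivalence of $T\le 1$ with \eqref{eqDOSR2}$\,\ge 0$, membership in $\mathcal{K}_H^2(\lambda)$ through Theorem~\ref{thmK2}, and full starlikeness from $1-\lambda+\lambda n\ge 1$ together with Lemma~\ref{lemFS} (the paper reaches the same conclusion via the Kalaj--Ponnusamy--Vuorinen coefficient criterion, which is the same statement). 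Your monotone-majorant argument for the uniqueness of $r_c$ is a genuine improvement on the paper, which is silent on uniqueness; the only caveat is that the theorem allows all $\lambda\ge 0$, and for $\lambda>1$ your splitting of $T$ carries the negative weight $1-\lambda$, so you should instead write $T=\sum_{n\ge2}n^2\bigl(1+\lambda(n-1)\bigr)r^{n-1}$, whose coefficients are positive for every $\lambda\ge 0$.

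The sharpness paragraph, however, contains a real error. With the \emph{unflipped} mapping $L=H_2+\overline{G_2}$ as extremal, the mechanism you invoke --- a Jacobian or $\partial_\theta\arg(\cdot)\big|_{\theta=0}$ attaining its critical value at $r=r_c$ --- never occurs: since $a_n=\tfrac{n+1}{2}>0$ and $b_n=-\tfrac{n-1}{2}$, every factor arising in those computations at real $z=\rho$ has the form $1+\sum_{n\ge2}(\mbox{positive})\rho^{n-1}$ and so never vanishes; concretely,
\begin{equation*}
J_{DL}(r)=\frac{(1-r^2)\left(1+4r+r^2\right)}{(1-r)^8}>0 \quad (0<r<1),
\end{equation*}
and $L$ itself is a convex univalent mapping, univalent and fully starlike well beyond $r_c$. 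What $L$ \emph{does} prove sharp is the $\mathcal{K}_H^2(\lambda)$ assertion, but by arguing on the defining inequality rather than on a Jacobian: at real $\rho$ one has exactly $|h'_\lambda(\rho)-1|+|g'_\lambda(\rho)|=\sum_{n\ge2}(1-\lambda+\lambda n)n^2\rho^{n-1}=T(\rho)$, which exceeds $1$ once $\rho>r_c$. If instead you want the Jacobian/univalence mechanism of Theorems~\ref{thmFS} and~\ref{thmFSR} (this is what the paper's ``follows as before'' points to), you must take the sign-flipped extremal $h_0=2z-H_2$, $g_0=-G_2$, whose coefficients $-\tfrac{n+1}{2}$ and $\tfrac{n-1}{2}$ are still admissible for \eqref{BBHJ2}; then at real $\rho$ one gets $h'_{0,\lambda}(\rho)-g'_{0,\lambda}(\rho)=1-T(\rho)$, so the Jacobian of $h_{0,\lambda}+\overline{g_{0,\lambda}}$ changes sign precisely at $r_c$. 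Finally, note that for $\lambda>0$ only the $\mathcal{K}_H^2(\lambda)$ half of the conclusion can be sharp at all: the $\lambda=0$ case of the same theorem already makes every admissible $f$ fully starlike in the strictly larger disk $|z|<r_c(0)$, so no example whatsoever can pin the fully-starlike radius at $r_c(\lambda)$.
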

\begin{proof}
Clearly, it suffices to observe that
$$n\left(|a_n|+|b_n|\right)\leq n\left(\frac{n+1}{2}+\frac{n-1}{2}\right)=n^2
$$
and thus,
$$S_{4}=\sum_{n=2}^{\infty}(1-\lambda+\lambda n) n (|a_{n}|+|b_{n}|)r^{n-1}
\leq (1-\lambda)\sum_{n=2}^{\infty}n^2 r^{n-1}+\lambda\sum_{n=2}^{\infty}n^3 r^{n-1}=:T_4.
$$
It follows that $S_4\leq 1$ if $T_4\leq 1$ which is equivalent to
$$1-4(2+\lambda) r+(13-2 \lambda ) r^2-8 r^3+2 r^4\geq 0.
$$
The desired conclusion  and the sharpness follow as before.
\end{proof}

If $\lambda=0$ in Theorem~\ref{thmCt}, the equation~\eqref{eqDOSR2} reduces to
$$(1-r)\left(1-7 r+6 r^2-2 r^3\right)= 0
$$
and this gives the following result obtained in \cite{Kalaj2014}.

\begin{thm}\label{thDS2}
Let $h$ and $g$ have the form~\eqref{phg} and the coefficients of the series satisfy the conditions~\eqref{BBHJ2}. Then, $f=h+\overline{g}$ satisfies the inequality
\begin{equation*}
|h'(z)-1|<1-|g'(z)|
\end{equation*}
in the disk $|z|<r_{S}\approx 0.164878$ and fully starlike in $|z|<r_{S}$, where $r_{S}$  is the root of the equation
\begin{equation*}
1-7r+6r^2-2r^3=0
\end{equation*}
in the interval $(0,1)$. The result is sharp.
\end{thm}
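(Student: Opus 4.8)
The plan is to obtain the statement as the special case $\lambda=0$ of Theorem~\ref{thmCt} and then to exhibit the extremal mapping that forces the radius to be best possible. First I would record that at $\lambda=0$ the symbol $\phi_\lambda$ collapses to $\phi_0(z)=z/(1-z)=\sum_{n=1}^{\infty}z^n$, which acts as the identity under the Hadamard product on series of the form~\eqref{phg}; consequently $h_0=h$ and $g_0=g$, so that $\mathcal{K}_H^2(0)=\mathcal{K}_H^2$ and the defining inequality of $\mathcal{K}_H^2(\lambda)$ becomes precisely $|h'(z)-1|<1-|g'(z)|$. With this identification, the conclusion of Theorem~\ref{thmCt} at $\lambda=0$ is exactly the assertion to be proved, once the governing equation is simplified.

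Next I would set $\lambda=0$ in~\eqref{eqDOSR2}, obtaining $1-8r+13r^2-8r^3+2r^4=0$, and factor
\[
1-8r+13r^2-8r^3+2r^4=(1-r)\left(1-7r+6r^2-2r^3\right).
\]
Since the factor $1-r$ yields only the boundary value $r=1$, the radius is governed by the cubic $1-7r+6r^2-2r^3$, which equals $1$ at $r=0$, equals $-2$ at $r=1$, and is strictly decreasing on $(0,1)$ because its derivative $-7+12r-6r^2$ has negative leading coefficient and no real zeros; hence it has a single zero $r_S\approx0.164878$ in $(0,1)$. To keep the step self-contained I would also note the estimate underlying Theorem~\ref{thmCt}: from~\eqref{BBHJ2} one has $|a_n|+|b_n|\le\frac{n+1}{2}+\frac{n-1}{2}=n$ for $n\ge2$ (and $b_1=0$ by the $\mathcal{H}_0$ normalization), so that feeding the dilated coefficients $a_nr^{n-1}$, $b_nr^{n-1}$ of $f_r$ into Theorem~\ref{thmK2} and Lemma~\ref{lemFS} (both with $\lambda=0$, $\alpha=0$) reduces both conclusions to $\sum_{n=2}^{\infty}n^2r^{n-1}\le1$. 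By Lemma~\ref{lemID}(b) this is $\frac{r(4-3r+r^2)}{(1-r)^3}\le1$, equivalent to $1-7r+6r^2-2r^3\ge0$, so membership in $\mathcal{K}_H^2$ and full starlikeness both persist exactly for $|z|<r_S$.

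For sharpness I would test the extremal harmonic mapping $f^{*}=h^{*}+\overline{g^{*}}$ defined by $h^{*}(z)=2z-H_2(z)$ and $g^{*}(z)=-G_2(z)$, where $H_2,G_2$ are taken from~\eqref{eqRL}; its coefficients meet~\eqref{BBHJ2} with equality. Using $H_2'(z)=(1-z)^{-3}$ and $G_2'(z)=-z(1-z)^{-3}$, I would compute $h^{*\prime}(r)-g^{*\prime}(r)=2-\frac{1+r}{(1-r)^3}=\frac{1-7r+6r^2-2r^3}{(1-r)^3}$ and, likewise, $\bigl(1-|g^{*\prime}(r)|\bigr)-|h^{*\prime}(r)-1|=\frac{1-7r+6r^2-2r^3}{(1-r)^3}$. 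Thus at $z=r_S$ the mapping $f^{*}$ attains equality in $|h'(z)-1|=1-|g'(z)|$, while the starlikeness functional
\[
\frac{\partial}{\partial\theta}\arg f^{*}(re^{i\theta})\Big|_{\theta=0}=\frac{r\left(h^{*\prime}(r)-g^{*\prime}(r)\right)}{h^{*}(r)+g^{*}(r)}=\frac{1-7r+6r^2-2r^3}{(1-r)^2(1-2r)}
\]
vanishes exactly at $r=r_S$. Hence neither the inequality nor full starlikeness can survive beyond $|z|<r_S$, so the radius is best possible.

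Because the result is merely the instance $\lambda=0$ of Theorem~\ref{thmCt}, no essentially new difficulty arises; the only delicate point is the sharpness bookkeeping, namely verifying that the single cubic $1-7r+6r^2-2r^3$ simultaneously governs the coefficient estimate, the boundary of the $\mathcal{K}_H^2$ inequality, and the vanishing of the starlikeness functional for the extremal mapping $f^{*}$, so that all three radii coincide at $r_S$.
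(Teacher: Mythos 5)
Your proposal is correct and follows essentially the same route as the paper: there the theorem is obtained precisely as the $\lambda=0$ specialization of Theorem~\ref{thmCt}, with \eqref{eqDOSR2} factoring as $(1-r)\left(1-7r+6r^2-2r^3\right)=0$, which is exactly your reduction. Your explicit sharpness verification via $f^{*}=2z-H_{2}(z)-\overline{G_{2}(z)}$ (equality in \eqref{BBHJ2}, failure of both $|h'(z)-1|<1-|g'(z)|$ and the starlikeness functional beyond $r_{S}$) just fills in, accurately, the details the paper leaves implicit by saying ``the sharpness follows as before.''
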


If $\lambda=1$ in Theorem~\ref{thmCt}, the equation~\eqref{eqDOSR2} reduces to the equation~\eqref{eqrSR0} in Theorem~\ref{thmFSR}(2).
Moreover, by Lemma~\ref{lemFS} and Theorem~\ref{thmCt}, we can easily obtain the following result.

\begin{corollary}\label{clS1}
Let $f=h+\overline{g}$, where $h$ and $g$ are given by~\eqref{phg}, and the coefficients satisfy the conditions~\eqref{BBHJ2}
for $n\geq 2$. Then the radius of fully starlikeness for $F(z)=(1-\lambda)f+\lambda D_f^{\epsilon}$ is at least $r_{c}$. where $r_{c}$ is the root of the equation~\eqref{eqDOSR2} in the interval $(0,1)$. The result is sharp.
\end{corollary}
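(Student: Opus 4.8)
The plan is to run the argument of Corollary~\ref{clS} with the convex bounds~\eqref{BBHJ2} in place of the harmonic Bieberbach bounds~\eqref{BBHJ}, and with Theorem~\ref{thmCt} in the role played there by Theorem~\ref{thmSt}. First I would rescale, writing
\[
\frac{F(rz)}{r}=z+\sum_{n=2}^{\infty}A_n z^n+\overline{\sum_{n=2}^{\infty}B_n z^n},
\]
and read off from $F=(1-\lambda)f+\lambda D_f^{\epsilon}$ (whose analytic part is $(1-\lambda)h+\lambda zh'$ and whose co-analytic generator is built from $(1-\lambda)g$ and $\lambda zg'$) that
\[
A_n=\bigl[(1-\lambda)+\lambda n\bigr]a_n r^{n-1},\qquad B_n=\bigl[(1-\lambda)+\overline{\epsilon}\,\lambda n\bigr]b_n r^{n-1},
\]
exactly as in the proof of Corollary~\ref{clS}. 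Since $|\epsilon|=1$ and $0\leq\lambda\leq1$, the triangle inequality gives $\bigl|(1-\lambda)+\overline{\epsilon}\,\lambda n\bigr|\leq (1-\lambda)+\lambda n$, so that
\[
n\bigl(|A_n|+|B_n|\bigr)\leq (1-\lambda+\lambda n)\,n\bigl(|a_n|+|b_n|\bigr)r^{n-1}.
\]

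Next I would invoke~\eqref{BBHJ2}, which yields $|a_n|+|b_n|\leq \tfrac{n+1}{2}+\tfrac{n-1}{2}=n$ and hence $n(|a_n|+|b_n|)\leq n^2$. Summing over $n\geq2$ therefore gives
\[
\sum_{n=2}^{\infty}n\bigl(|A_n|+|B_n|\bigr)\leq \sum_{n=2}^{\infty}(1-\lambda+\lambda n)\,n^{2}r^{n-1}=T_4,
\]
where $T_4$ is exactly the quantity appearing in the proof of Theorem~\ref{thmCt}. By that theorem the inequality $T_4\leq1$ is equivalent to the nonnegativity of the polynomial in~\eqref{eqDOSR2}, i.e. to $r\leq r_c$. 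Consequently the Jahangiri coefficient sum for $F(rz)/r$ is at most $1$ whenever $r\leq r_c$, and Lemma~\ref{lemFS} with $\alpha=0$ shows $F(rz)/r\in\mathcal{FS}^{*}_H$. This gives the claimed lower bound $r_c$ for the radius of full starlikeness.

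The delicate point is the sharpness, since Lemma~\ref{lemFS} is only a sufficient condition and one must rule out that full starlikeness persists beyond $r_c$ for every admissible $f$. For this I would make every inequality above an equality: choose the unimodular $\epsilon$ for which $\bigl|(1-\lambda)+\overline{\epsilon}\,\lambda n\bigr|=(1-\lambda)+\lambda n$, and take $f$ to be the harmonic mapping with $h=2z-H_2$ and $g=G_2$, where $H_2,G_2$ come from the right half-plane mapping $L$ of~\eqref{eqRL}; its coefficients satisfy $|a_n|=\tfrac{n+1}{2}$ and $|b_n|=\tfrac{n-1}{2}$, attaining equality in~\eqref{BBHJ2}. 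For this extremal data $F=(1-\lambda)f+\lambda D_f^{\epsilon}$ has real coefficients, so I would compute $J_F(r)=(H'+G')(H'-G')$ on the positive axis together with $\partial_\theta\arg F(re^{i\theta})\big|_{\theta=0}$, just as in the sharpness parts of Theorems~\ref{thmFSR} and~\ref{thmCt}. The main obstacle is verifying that the polynomial $1-4(2+\lambda)r+(13-2\lambda)r^{2}-8r^{3}+2r^{4}$ of~\eqref{eqDOSR2} factors out of the relevant numerator and changes sign precisely at $r=r_c$, so that full starlikeness (indeed local univalence) fails immediately past $r_c$; once this factorization is in hand the sharpness is immediate.
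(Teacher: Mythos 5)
Your proposal is correct and follows essentially the same route as the paper: the paper derives this corollary exactly as you do, by rescaling $F$, bounding $n(|A_n|+|B_n|)$ via $|a_n|+|b_n|\leq n$ from~\eqref{BBHJ2} to reduce to the quantity $T_4$ of Theorem~\ref{thmCt}, and then invoking Lemma~\ref{lemFS} (the paper in fact states this in a single sentence, with no written proof). Your sketch of the sharpness verification actually goes further than the paper, which merely asserts sharpness by analogy with the earlier extremal computations.
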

\begin{theorem}\label{clC}
Let $h$ and $g$ have the form~\eqref{phg} with $|b_{1}|=|g'(0)|<1$, and the coefficients satisfy the conditions
$$|a_{n}|+|b_{n}|\leq c $$
for $n\geq 2$. Then, $f=h+\overline{g}\in \mathcal{K}_H^2(\lambda)$ and
is fully starlike in $|z|<r_{v}$, where $r_{v}$ is the root of the equation $\Phi_{c,|b_1|,\lambda}(r)=0$ in the interval $(0,1)$, where
\begin{equation}\label{eqRC2}
\Phi_{c,|b_1|,\lambda}(r)=(1+c-|b_{1}|)(1-r)^3-c\left[1+(2\lambda-1)r\right].
\end{equation}
 The result is sharp.
\end{theorem}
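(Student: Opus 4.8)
The plan is to mirror the proof of Theorem~\ref{thmSt}, reducing everything to the coefficient criterion of Theorem~\ref{thmK2} applied to the dilation $f_r(z)=r^{-1}f(rz)$. Since $|b_1|<1$ we have $J_{f_r}(0)=1-|b_1|^2>0$, and the $n$-th Taylor coefficients of $f_r$ are $a_nr^{n-1}$ and $b_nr^{n-1}$ while the constant co-analytic coefficient remains $b_1$. Hence it suffices to verify, for $0<r\le r_v$, that
\begin{equation*}
\sum_{n=2}^{\infty}(1-\lambda+\lambda n)\,n\,(|a_n|+|b_n|)\,r^{n-1}\le 1-|b_1|.
\end{equation*}

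First I would insert the hypothesis $|a_n|+|b_n|\le c$ to bound the left-hand side by $c\sum_{n\ge2}(1-\lambda+\lambda n)n\,r^{n-1}$, split $(1-\lambda+\lambda n)n=(1-\lambda)n+\lambda n^2$, and evaluate the two resulting series by Lemma~\ref{lemID}(a) and (b). Placing everything over $(1-r)^3$, the numerator collapses to $c\,r\,(2-3r+r^2+2\lambda)$, so the required inequality is equivalent to $c\,r(2-3r+r^2+2\lambda)\le(1-|b_1|)(1-r)^3$, which in turn is exactly $\Phi_{c,|b_1|,\lambda}(r)\ge0$ once one expands $\Phi$ via $(1-r)^3=1-3r+3r^2-r^3$. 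Verifying this algebraic identity is the one routine computation that must be done carefully.

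Next I would settle existence and uniqueness of the root without differentiating $\Phi$. Setting $\psi(r):=c\sum_{n\ge2}(1-\lambda+\lambda n)n\,r^{n-1}$, I observe that every coefficient is positive, since $(1-\lambda+\lambda n)n=n\bigl[1+\lambda(n-1)\bigr]>0$ for $n\ge2$ and $\lambda\ge0$; hence $\psi$ is strictly increasing on $(0,1)$ with $\psi(0^+)=0$ and $\psi(r)\to+\infty$ as $r\to1^-$. As $1-|b_1|>0$, the equation $\psi(r)=1-|b_1|$ — equivalently $\Phi_{c,|b_1|,\lambda}(r)=0$ — has a unique root $r_v\in(0,1)$, and $\psi(r)\le 1-|b_1|$ precisely for $r\le r_v$. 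Theorem~\ref{thmK2} then yields $f_r\in\mathcal{K}_H^2(\lambda)$ on $|z|<r_v$. For the starlikeness claim I would note that each factor satisfies $1-\lambda+\lambda n\ge1$, so the same estimate gives $\sum_{n\ge2}n(|a_n|+|b_n|)r^{n-1}\le 1-|b_1|$; adding the $n=1$ term $|b_1|$ recovers the hypothesis of Lemma~\ref{lemFS} with $\alpha=0$ for $f_r$, whence $f_r$ is fully starlike.

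Finally, for sharpness I would take the extremal pair $h(z)=z+c\sum_{n\ge2}z^n=z+cz^2/(1-z)$ and $g(z)=|b_1|z$, so that $|a_n|+|b_n|=c$ for every $n\ge2$ and all the triangle-inequality estimates above become equalities. Since $h'_\lambda(r)-1=\psi(r)>0$ and $g'_\lambda(r)=|b_1|$ at $z=r\in(0,1)$, we get $|h'_\lambda(r)-1|+|g'_\lambda(r)|=\psi(r)+|b_1|$, which equals $1$ exactly at $r=r_v$ and exceeds $1$ for $r>r_v$; thus the defining inequality of $\mathcal{K}_H^2(\lambda)$ fails immediately beyond $r_v$, establishing sharpness. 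The only genuinely delicate point is choosing the signs in the extremal so that every estimate is tight at $z=r_v$; the remainder is bookkeeping powered by Lemma~\ref{lemID}.
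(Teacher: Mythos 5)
Your positive half of the theorem is correct and is essentially the paper's own argument: you reduce to Theorem~\ref{thmK2} for $f_r$, bound the sum by $c\sum_{n\ge 2}(1-\lambda+\lambda n)n r^{n-1}$, evaluate with Lemma~\ref{lemID}(a),(b), and your algebra is right, since indeed
\begin{equation*}
\Phi_{c,|b_1|,\lambda}(r)=(1-|b_1|)(1-r)^3-c\,r\,(2-3r+r^2+2\lambda).
\end{equation*}
Your monotonicity argument for existence and uniqueness of $r_v$, and the deduction of full starlikeness on $|z|<r_v$ from $1-\lambda+\lambda n\ge 1$ together with Lemma~\ref{lemFS}, are also sound; the paper leaves both points implicit, so this part of your write-up is if anything more complete.

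The genuine gap is in the sharpness. The theorem claims $r_v$ is sharp for \emph{both} conclusions, and your extremal $h(z)=z+cz^2/(1-z)$, $g(z)=|b_1|z$ only witnesses failure of the first: for $r>r_v$ the defining inequality of $\mathcal{K}_H^2(\lambda)$ fails near $z=r$. But that inequality (like the coefficient condition \eqref{eqC1}) is merely a \emph{sufficient} condition for univalence and starlikeness, so its failure says nothing about $f$ ceasing to be fully starlike at $r_v$. Indeed, with your all-positive sign choice nothing geometric degenerates there: at the very point $z=r$ where your estimate is tight, the Jacobian of the associated mapping $h_\lambda+\overline{g_\lambda}$ equals $\bigl(1+\psi(r)\bigr)^2-|b_1|^2>0$, so local univalence survives past $r_v$ and you have proved no failure of starlikeness anywhere. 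This is exactly the "delicate sign choice" you flagged but did not make. The paper instead takes $h_1(z)=z-\tfrac{c}{2}\tfrac{z^2}{1-z}$, $g_1(z)=-|b_1|z-\tfrac{c}{2}\tfrac{z^2}{1-z}$ and, for $f_0=(1-\lambda)f_1+\lambda\bigl(zh_1'+\overline{zg_1'}\bigr)$, computes
\begin{equation*}
J_{f_0}(r)=\frac{1+|b_1|}{(1-r)^3}\,\Phi_{c,|b_1|,\lambda}(r),
\end{equation*}
so the Jacobian changes sign exactly at $r_v$; by Lewy's theorem the mapping fails to be locally univalent, hence fails to be fully starlike, in any larger disk (for $\lambda=0$ this is $f_1$ itself, recovering Kalaj et al.). To complete your proof, replace your extremal by this one (negative coefficients) and run the Jacobian computation; the inequality-violation argument alone does not establish the starlikeness half of the sharpness claim.
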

\begin{proof}

We apply Theorem~\ref{thmK2} and show that $f_{r}$ defined by~\eqref{eqDFr0} belongs to $\mathcal{K}_{H}^{2}(\lambda)$.
As in the proofs of Theorems~\ref{thmSt} and \ref{thmCt}, it is suffices to show that the corresponding coefficient
inequality~\eqref{eqC1}, namely,
\begin{equation*}
\begin{split}
S_5&=\sum_{n=2}^{\infty}(1-\lambda+\lambda n) n (|a_{n}|+|b_{n}|)r^{n-1}+|b_1|\\
&\leq \sum_{n=2}^{\infty}(1-\lambda+\lambda n) n c\, r^{n-1}+|b_1|\leq 1.
\end{split}
\end{equation*}
By Lemma~\ref{lemID} $(a)$ and $(b)$, the last inequality is easily seen to be equivalent to
$$c\left[(1-\lambda)\frac{r(2-r)}{(1-r)^2}+\lambda\frac{r \left(4-3 r+r^2\right)}{(1-r)^3}\right]\leq 1-|b_1|$$
which upon simplification reduces to $\Phi_{c,|b_1|,\lambda}(r)\geq 0$. The result follows.
$$(1+c-|b_{1}|)(1-r)^3-c\left[1+(2\lambda-1)r\right]\geq 0.
$$

The function $f_0=h_0+\overline{g_0}=(1-\lambda)h_1+\lambda zh'_1+\overline{(1-\lambda)g_1+\lambda zg'_1}$, where
$$h_1(z)=z-\frac{c}{2}\left(\frac{z^2}{1-z}\right)
\quad{\rm and}\quad
g_1(z)=-|b_1|z-\frac{c}{2}\left(\frac{z^2}{1-z}\right),
$$
shows that the result is sharp. Note that
\begin{equation*}
J_{f_0}(r)=|h'_{0}(r)|^2-|g'_{0}(r)|^2
=\frac{(1+|b_1|)}{(1-r)^3}\left((1+c-|b_{1}|)(1-r)^3-c\left[1+(2\lambda-1)r\right]\right)
\end{equation*}
which shows that $J_{f_0}(r)>0$ for $r<r_v$. The proof of the theorem is complete.
\end{proof}

\begin{remark}
Take $\lambda=0$ in Theorem~\ref{clC}, the equation~\eqref{eqRC2} reduces to
$$(1-r)\left[(1+c-|b_{1}|)(1-r)^2-c\right]=0.$$
Then $f_{r}(z)\in \mathcal{FS}_{H}^{*}$ for $r<r_{S}=1-\sqrt{\frac{c}{1+c-|b_{1}|}}$.
This result is due to Kalaj et al.~\cite[Lemma 1.6]{Kalaj2014}.
\end{remark}

 \vskip.20in


\begin{center}{\sc Acknowledgements}
\end{center}

\vskip.05in
The research of the first author was supported by the
First Batch of Young and Middle-aged Academic Training Object Backbone of Honghe University under Grant No. 2014GG0102. The work of the first author
was completed during his visit to the Indian Statistical Institute (ISI), Chennai Centre.
The second author is currently at the ISI, Chennai Centre.




\end{document}